\def \eig {\mathop{\rm eig}\nolimits}
\def \tr {\mathop{\rm tr}\nolimits}
\def \re {\mathop{\rm Re}\nolimits}
\def \Vol {\mathop{\rm Vol}\nolimits}
\def \etr {\mathop{\rm etr}\nolimits}
\def \diag {\mathop{\rm diag}\nolimits}
\def \build#1#2#3{\mathrel{\mathop{#1}\limits^{#2}_{#3}}}
\renewenvironment{abstract}
                 {\vspace{6pt}
                  \begin{center}
                  \begin{minipage}{5in}
                  \centerline{\textbf{Abstract}}
                  \noindent\ignorespaces
                 }
                 {\end{minipage}\end{center}}
\newtheorem{thm}{\textbf{Theorem}}[section]
\newtheorem{cor}{\textbf{Corollary}}[section]
\newtheorem{lem}{\textbf{Lemma}}[section]
\theoremstyle{definition}
\title{\Large \textbf{Matricvariate and matrix multivariate Pearson type II distributions}}
\author{
  \textbf{Jos\'e A. D\'{\i}az-Garc\'{\i}a} \thanks{Corresponding author\newline
   {\bf Key words.} Matricvariate; elliptical distribution; inverted $T$ distribution; nonsingular central distributions; real,
    complex, quaternion and octonion random matrices; beta type I distributions.\newline
    2000 Mathematical Subject Classification. Primary 60E05, 62E15; secondary
    15A52}\\
  {\normalsize Department of Statistics and Computation} \\
  {\normalsize 25350 Buenavista, Saltillo, Coahuila, Mexico} \\
  {\normalsize E-mail: jadiaz@uaaan.mx} \\[2ex]
  \textbf{Ram\'on Guti\'errez J\'aimez} \\
  {\normalsize Department of Statistics and O.R.} \\
  {\normalsize University of Granada} \\
  {\normalsize Granada 18071, Spain}\\
  {\normalsize E-mail: rgjaimez@ugr.es}\\
}
\date{}
\begin{document}
\maketitle

\begin{abstract}
This paper proposes a unified approach to enable the study of diverse distributions in the
real, complex, quaternion and octonion cases, simultaneously. In particular, the central,
nonsingular matricvariate and matrix multivariate Pearson type II distribution, beta type I
distributions and the joint density of the singular values are obtained for real normed
division algebras.
\end{abstract}

\section{Introduction}\label{sec1}

In the last twenty years, the concept and the statistical and mathematical techniques known as
\emph{multivariate analysis} have changed dramatically. New statistical and mathematical tools
for the analysis of multivariate data have been developed in diverse areas of  knowledge, and
have promoted new disciplines such as pattern recognition, nonlinear multivariate analysis,
data mining, manifold learning, generalised multivariate analysis, latent variable analysis and
shape theory, among others. These and other fields constitute waht is known as \textit{modern
multivariate analysis}.

Renewed interest in multivariate analysis in the complex case has emerged in diverse areas, see
\citet{me:91}, \citet{rva:05a} and \citet{mdm:06}, among many others. Similarly, several works
involving multivariate analysis have appeared in the context of the quaternion case, see
\citet{bh:00}, \citet{f:05}, \citet{lx:09}, among others. Although receiving little attention
from a practical standpoint, but equally interesting from the theoretical point of view, some
results have appeared in the octonion case, see \citet{f:05}. This lack of widespread interest
may be, because as asserted by \citet{b:02}, \textit{...there is still no proof that the
octonions are useful for understanding the real world}. We can only hope that eventually this
question will be settled one way or another.

In terms of concepts, definitions, properties and notation from abstract algebra, it is
possible to propose a unified approach that enables the simultaneous study of the distribution
of a random matrix in real, complex, quaternion and octonion cases, which is termed as the
distribution of a random matrix for real normed division algebras.

In the real case, the matricvariate Pearson type II distribution appears in the frequentist
approach to normal regression as the distribution of the Studentised error, see
\citet{jdggj:06} and \citet{kn:04}. In Bayesian inference, the matricvariate Pearson type II
distribution is assumed as the sampling distribution; then, considering a noninformative prior
distribution, the posterior distribution and marginal distributions, the posterior mean and
generalised maximum likelihood estimators of the parameters involved are found, \citet{fl:99}.
A very important question is that of the role of the Pearson type II distribution in
multivariate analysis, because if the matrix $\mathbf{R}$ has a matricvariate Pearson type II
distribution, then the matrix $\mathbf{RR}^{*}$ (or $\mathbf{R}^{*}\mathbf{R}$) is distributed
as beta type I; and the distribution of the latter, in particular, plays a fundamental role in
the MANOVA model, see \citet{k:59,k:70} and \citet{m:82}.

The present article is organised as follows; a minimal number of concepts and the notation of
abstract algebra and Jacobians are summarised in Section \ref{sec2}. Section \ref{sec3} then
derives the nonsingular central matricvariate Pearson type II and the beta type I distributions
and some basic properties. Similarly, results are obtained for the matrix multivariate Pearson
type II and the corresponding beta type I distributions, see Section \ref{sec4}. Finally, the
joint densities of the singular values are derived in Section \ref{sec5}. We emphasise that all
these results are found for real normed division algebras.

\section{Preliminary results}\label{sec2}

A detailed discussion of real normed division algebras may be found in \citet{b:02} and
\citet{gr:87}. For convenience, we shall introduce some notation, although in general we adhere
to standard notation forms.

For our purposes, a \textbf{vector space} is always a finite-dimensional module over the field
of real numbers. An \textbf{algebra} $\mathfrak{F}$ is a vector space that is equipped with a
bilinear map $m: \mathfrak{F} \times \mathfrak{F} \rightarrow \mathfrak{F}$ termed
\emph{multiplication} and a nonzero element $1 \in \mathfrak{F}$ termed the \emph{unit} such
that $m(1,a) = m(a,1) = 1$. As usual, we abbreviate $m(a,b) = ab$ as $ab$. We do not assume
$\mathfrak{F}$ associative. Given an algebra, we freely think of real numbers as elements of
this algebra via the map $\omega \mapsto \omega 1$.

An algebra $\mathfrak{F}$ is a \textbf{division algebra} if given $a, b \in \mathfrak{F}$ with
$ab=0$, then either $a=0$ or $b=0$. Equivalently, $\mathfrak{F}$ is a division algebra if the
operation of left and right multiplications by any nonzero element is invertible. A
\textbf{normed division algebra} is an algebra $\mathfrak{F}$ that is also a normed vector
space with $||ab|| = ||a||||b||$. This implies that $\mathfrak{F}$ is a division algebra and
that $||1|| = 1$.

There are exactly four normed division algebras: real numbers ($\Re$), complex numbers
($\mathfrak{C}$), quaternions ($\mathfrak{H}$) and octonions ($\mathfrak{O}$), see
\citet{b:02}. We take into account that $\Re$, $\mathfrak{C}$, $\mathfrak{H}$ and
$\mathfrak{O}$ are the only normed division algebras; moreover, they are the only alternative
division algebras, and all division algebras have a real dimension of $1, 2, 4$ or $8$, which
is denoted by $\beta$, see \citet[Theorems 1, 2 and 3]{b:02}. In other branches of mathematics,
the parameters $\alpha = 2/\beta$ and $t = \beta/4$ are used, see \citet{er:05} and
\citet{k:84}, respectively.

Let ${\mathcal L}^{\beta}_{m,n}$ be the linear space of all $m \times n$ matrices of rank $m
\leq n$ over $\mathfrak{F}$ with $m$ distinct positive singular values, where $\mathfrak{F}$
denotes a \emph{real finite-dimensional normed division algebra}. Let $\mathfrak{F}^{m \times
n}$ be the set of all $m \times n$ matrices over $\mathfrak{F}$. The dimension of
$\mathfrak{F}^{m \times n}$ over $\Re$ is $\beta mn$. Let $\mathbf{A} \in \mathfrak{F}^{m
\times n}$, then $\mathbf{A}^{*} = \overline{\mathbf{A}}^{T}$ denotes the usual conjugate
transpose.

Table \ref{table1} sets out the equivalence between the same concepts in the four normed
division algebras.
\begin{table}[h]
  \centering
  \caption{Notation}\label{table1}
  \begin{footnotesize}
  \begin{tabular}{cccc|c}
    \hline
    Real & Complex & Quaternion & Octonion & \begin{tabular}{c}
                                               Generic \\
                                               notation \\
                                             \end{tabular}\\
    \hline
    Semi-orthogonal & Semi-unitary & Semi-symplectic & \begin{tabular}{c}
                                                         Semi-exceptional \\
                                                         type \\
                                                       \end{tabular}
      & $\mathcal{V}_{m,n}^{\beta}$ \\
    Orthogonal & Unitary & Symplectic & \begin{tabular}{c}
                                                         Exceptional \\
                                                         type \\
                                                       \end{tabular} & $\mathfrak{U}^{\beta}(m)$ \\
    Symmetric & Hermitian & \begin{tabular}{c}
                              Quaternion \\
                              hermitian \\
                            \end{tabular}
     & \begin{tabular}{c}
                              Octonion \\
                              hermitian \\
                            \end{tabular} & $\mathfrak{S}_{m}^{\beta}$ \\
    \hline
  \end{tabular}
  \end{footnotesize}
\end{table}

In addition, let $\mathfrak{P}_{m}^{\beta}$ be the \emph{cone of positive definite matrices}
$\mathbf{S} \in \mathfrak{F}^{m \times m}$; then $\mathfrak{P}_{m}^{\beta}$ is an open subset
of ${\mathfrak S}_{m}^{\beta}$.

Let $\mathfrak{D}_{m}^{\beta}$ be the \emph{diagonal subgroup} of $\mathcal{L}_{m,m}^{\beta}$
consisting of all $\mathbf{D} \in \mathfrak{F}^{m \times m}$, $\mathbf{D} = \diag(d_{1},
\dots,d_{m})$.

For any matrix $\mathbf{X} \in \mathfrak{F}^{n \times m}$, $d\mathbf{X}$ denotes the\emph{
matrix of differentials} $(dx_{ij})$. Finally, we define the \emph{measure} or volume element
$(d\mathbf{X})$ when $\mathbf{X} \in \mathfrak{F}^{m \times n}, \mathfrak{S}_{m}^{\beta}$,
$\mathfrak{D}_{m}^{\beta}$ or $\mathcal{V}_{m,n}^{\beta}$, see \citet{d:02}.

If $\mathbf{X} \in \mathfrak{F}^{m \times n}$ then $(d\mathbf{X})$ (the Lebesgue measure in
$\mathfrak{F}^{m \times n}$) denotes the exterior product of the $\beta mn$ functionally
independent variables
$$
  (d\mathbf{X}) = \bigwedge_{i = 1}^{m}\bigwedge_{j = 1}^{n}dx_{ij} \quad \mbox{ where }
    \quad dx_{ij} = \bigwedge_{k = 1}^{\beta}dx_{ij}^{(k)}.
$$

If $\mathbf{S} \in \mathfrak{S}_{m}^{\beta}$ (or $\mathbf{S} \in \mathfrak{T}_{L}^{\beta}(m)$
is a lower triangular matrix) then $(d\mathbf{S})$ (the Lebesgue measure in
$\mathfrak{S}_{m}^{\beta}$ or in $\mathfrak{T}_{L}^{\beta}(m)$) denotes the exterior product of
the $m(m+1)\beta/2$ functionally independent variables (or denotes the exterior product of the
$m(m-1)\beta/2 + n$ functionally independent variables, if $s_{ii} \in \Re$ for all $i = 1,
\dots, m$)
$$
  (d\mathbf{S}) = \left\{
                    \begin{array}{ll}
                      \displaystyle\bigwedge_{i \leq j}^{m}\bigwedge_{k = 1}^{\beta}ds_{ij}^{(k)}, &  \\
                      \displaystyle\bigwedge_{i=1}^{m} ds_{ii}\bigwedge_{i < j}^{m}\bigwedge_{k = 1}^{\beta}
                      ds_{ij}^{(k)}, & \hbox{if } s_{ii} \in \Re.
                    \end{array}
                  \right.
$$
The context generally establishes the conditions on the elements of $\mathbf{S}$, that is, if
$s_{ij} \in \Re$, $\in \mathfrak{C}$, $\in \mathfrak{H}$ or $ \in \mathfrak{O}$. It is
considered that
$$
  (d\mathbf{S}) = \bigwedge_{i \leq j}^{m}\bigwedge_{k = 1}^{\beta}ds_{ij}^{(k)}
   \equiv \bigwedge_{i=1}^{m} ds_{ii}\bigwedge_{i < j}^{m}\bigwedge_{k =
1}^{\beta}ds_{ij}^{(k)}.
$$
Observe, too, that for the Lebesgue measure $(d\mathbf{S})$ defined thus, it is required that
$\mathbf{S} \in \mathfrak{P}_{m}^{\beta}$, that is, $\mathbf{S}$ must be a non singular
Hermitian matrix (Hermitian definite positive matrix).

If $\mathbf{\Lambda} \in \mathfrak{D}_{m}^{\beta}$ then $(d\mathbf{\Lambda})$ (the Legesgue
measure in $\mathfrak{D}_{m}^{\beta}$) denotes the exterior product of the $\beta m$
functionally independent variables
$$
  (d\mathbf{\Lambda}) = \bigwedge_{i = 1}^{n}\bigwedge_{k = 1}^{\beta}d\lambda_{i}^{(k)}.
$$
If $\mathbf{H}_{1} \in \mathcal{V}_{m,n}^{\beta}$ then
$$
  (\mathbf{H}^{*}_{1}d\mathbf{H}_{1}) = \bigwedge_{i=1}^{m} \bigwedge_{j =i+1}^{n}
  \mathbf{h}_{j}^{*}d\mathbf{h}_{i}.
$$
where $\mathbf{H} = (\mathbf{H}^{*}_{1}|\mathbf{H}^{*}_{2})^{*} = (\mathbf{h}_{1}, \dots,
\mathbf{h}_{m}|\mathbf{h}_{m+1}, \dots, \mathbf{h}_{n})^{*} \in \mathfrak{U}^{\beta}(n)$. It
can be proved that this differential form does not depend on the choice of the $\mathbf{H}_{2}$
matrix. When $n = 1$; $\mathcal{V}^{\beta}_{m,1}$ defines the unit sphere in
$\mathfrak{F}^{m}$. This is, of course, an $(m-1)\beta$- dimensional surface in
$\mathfrak{F}^{m}$. When $n = m$ and denoting $\mathbf{H}_{1}$ by $\mathbf{H}$,
$(\mathbf{H}d\mathbf{H}^{*})$ is termed the \emph{Haar measure} on $\mathfrak{U}^{\beta}(m)$.

The surface area or volume of the Stiefel manifold $\mathcal{V}^{\beta}_{m,n}$ is
\begin{equation}\label{vol}
    \Vol(\mathcal{V}^{\beta}_{m,n}) = \int_{\mathbf{H}_{1} \in
  \mathcal{V}^{\beta}_{m,n}} (\mathbf{H}_{1}d\mathbf{H}^{*}_{1}) =
  \frac{2^{m}\pi^{mn\beta/2}}{\Gamma^{\beta}_{m}[n\beta/2]},
\end{equation}
where $\Gamma^{\beta}_{m}[a]$ denotes the multivariate \emph{Gamma function} for the space
$\mathfrak{S}_{m}^{\beta}$, and is defined by
\begin{eqnarray*}
  \Gamma_{m}^{\beta}[a] &=& \displaystyle\int_{\mathbf{A} \in \mathfrak{P}_{m}^{\beta}}
  \etr\{-\mathbf{A}\} |\mathbf{A}|^{a-(m-1)\beta/2 - 1}(d\mathbf{A}) \\
    &=& \pi^{m(m-1)\beta/4}\displaystyle\prod_{i=1}^{m} \Gamma[a-(i-1)\beta/2],
\end{eqnarray*}
where $\etr(\cdot) = \exp(\tr(\cdot))$, $|\cdot|$ denotes the determinant and $\re(a)
> (m-1)\beta/2$, see \citet{gr:87}. Similarly, from \citet{h:55} the \emph{multivariate beta function} for the
space $\mathfrak{S}^{\beta}_{m}$, can be defined as
\begin{eqnarray}
    \mathcal{B}_{m}^{\beta}[b,a] &=& \int_{\mathbf{0}<\mathbf{B}<\mathbf{I}_{m}}
    |\mathbf{B}|^{a-(m-1)\beta/2-1} |\mathbf{I}_{m} - \mathbf{B}|^{b-(m+1)\beta/2-1}
    (d\mathbf{B}) \nonumber\\
    &=& \int_{\mathbf{A} \in \mathfrak{P}_{m}^{\beta}} |\mathbf{A}|^{a-(m-1)\beta/2-1}
    |\mathbf{I}_{m} + \mathbf{A}|^{-(a+b)} (d\mathbf{A}) \nonumber\\ \label{beta}
    &=& \frac{\Gamma_{m}^{\beta}[a] \Gamma_{m}^{\beta}[b]}{\Gamma_{m}^{\beta}[a+b]},
\end{eqnarray}
where $\mathbf{A} = (\mathbf{I}-\mathbf{B})^{-1} -\mathbf{I}$, Re$(a) > (m-1)\beta/2$ and
Re$(b)> (m-1)\beta/2$.

Now, we show three Jacobians in terms of the $\beta$ parameter, which are based on the work of
\citet{k:84} and \citet{d:02}. These results are proposed as extensions of real, complex or
quaternion cases, see  \citet{j:64}, \citet{k:65}, \citet{me:91}, \citet{rva:05a} and
\citet{lx:09}, also see \citet{jdggj:09a}.

\begin{lem}\label{lemlt}
Let $\mathbf{X}$ and $\mathbf{Y} \in {\mathcal L}_{m,n}^{\beta}$, and let $\mathbf{Y} =
\mathbf{AXB} + \mathbf{C}$, where $\mathbf{A} \in {\mathcal L}_{m,m}^{\beta}$, $\mathbf{B} \in
{\mathcal L}_{n,n}^{\beta}$ and $\mathbf{C} \in {\mathcal L}_{m,n}^{\beta}$ are constant
matrices. Then
\begin{equation}\label{lt}
    (d\mathbf{Y}) = |\mathbf{A}^{*}\mathbf{A}|^{\beta n/2} |\mathbf{B}^{*}\mathbf{B}|^{\beta
    m/2}(d\mathbf{X}).
\end{equation}
\end{lem}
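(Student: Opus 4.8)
The plan is to establish the transformation rule in stages, using the multiplicativity of the exterior-product volume element under successive linear substitutions. Since the map $\mathbf{X} \mapsto \mathbf{AXB} + \mathbf{C}$ is the composition of the translation $\mathbf{X}' \mapsto \mathbf{X}' + \mathbf{C}$, the left multiplication $\mathbf{X}'' \mapsto \mathbf{A}\mathbf{X}''$, and the right multiplication $\mathbf{X} \mapsto \mathbf{X}\mathbf{B}$, it suffices to compute the Jacobian of each piece separately and multiply. The translation by the constant matrix $\mathbf{C}$ contributes a factor of $1$, so the whole content is in the two multiplications.

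First I would treat left multiplication $\mathbf{Y} = \mathbf{A}\mathbf{X}$ with $\mathbf{B} = \mathbf{I}_n$. Writing $\mathbf{X}$ and $\mathbf{Y}$ columnwise as $\mathbf{x}_1,\dots,\mathbf{x}_n$ and $\mathbf{y}_1,\dots,\mathbf{y}_n$ in $\mathfrak{F}^m$, each column transforms as $\mathbf{y}_j = \mathbf{A}\mathbf{x}_j$ independently. Over a normed division algebra, the real-linear map on $\mathfrak{F}^m$ induced by left multiplication by a fixed $m \times m$ matrix $\mathbf{A}$ has real Jacobian equal to $|\mathbf{A}^{*}\mathbf{A}|^{\beta/2}$ (this is the standard single-vector version; for $\beta = 1,2,4$ it reduces to $|\det \mathbf{A}|$, $|\det \mathbf{A}|^2$, $|\det \mathbf{A}|^4$, and is recorded in the references \citet{d:02}, \citet{k:84}, cf.\ \citet{j:64,k:65,me:91,rva:05a,lx:09}). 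Applying this to each of the $n$ columns and taking the exterior product gives $(d\mathbf{Y}) = |\mathbf{A}^{*}\mathbf{A}|^{\beta n/2}(d\mathbf{X})$.

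Next I would handle right multiplication $\mathbf{Y} = \mathbf{X}\mathbf{B}$ with $\mathbf{A} = \mathbf{I}_m$. The cleanest route is to pass to conjugate transposes: $\mathbf{Y}^{*} = \mathbf{B}^{*}\mathbf{X}^{*}$, which is again a left multiplication, now by the $n \times n$ matrix $\mathbf{B}^{*}$ acting on the $m$ columns of $\mathbf{X}^{*}$. Since the volume element is invariant under $\mathbf{X} \mapsto \mathbf{X}^{*}$ (it is just a relabelling of the $\beta mn$ real coordinates, up to sign, which does not affect the measure), the previous paragraph yields $(d\mathbf{Y}) = |(\mathbf{B}^{*})^{*}\mathbf{B}^{*}|^{\beta m/2}(d\mathbf{X}) = |\mathbf{B}^{*}\mathbf{B}|^{\beta m/2}(d\mathbf{X})$, using $|\mathbf{B}\mathbf{B}^{*}| = |\mathbf{B}^{*}\mathbf{B}|$. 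Composing the two maps and inserting the trivial translation factor gives \eqref{lt}.

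The main obstacle is the non-associativity of $\mathfrak{O}$: for octonions the "matrix" $\mathbf{A}\mathbf{X}$ must be interpreted carefully, and the clean factorization of the real Jacobian of left multiplication is not automatic. The resolution is that in all cases of interest here the matrices $\mathbf{A}$, $\mathbf{B}$, $\mathbf{C}$, $\mathbf{X}$ lie in $\mathcal{L}_{m,n}^{\beta}$ and the alternativity of $\mathfrak{O}$ (every subalgebra generated by two elements is associative) together with the structure theory cited from \citet{b:02} lets one reduce the octonion computation to the quaternion pattern; the single-vector Jacobian $|\mathbf{A}^{*}\mathbf{A}|^{\beta/2}$ continues to hold. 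One should also note that the hypothesis rank $m \le n$ ensures $\mathbf{A}^{*}\mathbf{A}$ and $\mathbf{B}^{*}\mathbf{B}$ are positive definite so the determinant powers are well defined and positive; this is where the restriction to $\mathcal{L}_{m,n}^{\beta}$ (rather than all of $\mathfrak{F}^{m\times n}$) is used.
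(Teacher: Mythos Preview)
The paper does not actually prove this lemma. It is stated in Section~\ref{sec2} as one of three preliminary Jacobians ``based on the work of \citet{k:84} and \citet{d:02}'' and ``proposed as extensions of real, complex or quaternion cases,'' with a string of citations in lieu of argument. There is therefore nothing to compare your proposal against line by line.

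That said, your argument is the standard one and is correct for $\beta = 1,2,4$: decompose $\mathbf{X}\mapsto \mathbf{AXB}+\mathbf{C}$ into a translation (Jacobian $1$), a left multiplication acting identically on each of the $n$ columns (contributing $|\mathbf{A}^{*}\mathbf{A}|^{\beta/2}$ per column), and a right multiplication handled via the conjugate-transpose trick. This is exactly the route taken in the cited sources for the real, complex, and quaternion cases, so in spirit you are reproducing what the paper delegates to references.

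Your honesty about the octonion case is appropriate, but the resolution you sketch is not really a resolution. Alternativity gives associativity only for subalgebras generated by two elements, and a general matrix product $\mathbf{A}\mathbf{X}$ over $\mathfrak{O}$ involves far more than two octonions at once; the ``reduce to the quaternion pattern'' step has no content as written. The paper itself is silent on this point and simply asserts the formula for all four $\beta$, citing \citet{f:05} and \citet{b:02} for the octonionic background. If you want a genuine proof for $\beta=8$ you would need to restrict to the exceptional Jordan-algebraic setting (e.g.\ $3\times 3$ Hermitian octonionic matrices) where determinants and the relevant Jacobians are rigorously defined; outside that setting the statement should be read as a formal extension rather than a theorem. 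This is a gap in the literature the paper inherits, not one you introduced.
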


\begin{lem}[Singular value decomposition, $SVD$]\label{lemsvd}
Let $\mathbf{X} \in {\mathcal L}_{m,n}^{\beta}$, such that $\mathbf{X} =
\mathbf{V}^{*}\mathbf{DW}_{1}$ with $\mathbf{V} \in \mathfrak{U}^{\beta}(m)$, $\mathbf{W}_{1}
\in \mathcal{V}_{m,n}^{\beta}$ and $\mathbf{D} = \diag(d_{1}, \cdots,d_{m}) \in
\mathfrak{D}_{m}^{1}$, $d_{1}> \cdots > d_{m} > 0$. Then
\begin{equation}\label{svd}
    (d\mathbf{X}) = 2^{-m}\pi^{\tau} \prod_{i = 1}^{m} d_{i}^{\beta(n - m + 1) -1}
    \prod_{i < j}^{m}(d_{i}^{2} - d_{j}^{2})^{\beta} (d\mathbf{D}) (\mathbf{V}d\mathbf{V}^{*})
    (\mathbf{W}_{1}d\mathbf{W}_{1}^{*}),
\end{equation}
where
$$
  \tau = \left\{
             \begin{array}{rl}
               0, & \beta = 1; \\
               -m, & \beta = 2; \\
               -2m, & \beta = 4; \\
               -4m, & \beta = 8.
             \end{array}
           \right.
$$
\end{lem}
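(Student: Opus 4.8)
The plan is to derive the singular value decomposition Jacobian by combining the polar/QR-type factorization already implicit in the literature with the known Jacobian for the spectral decomposition of a Hermitian positive definite matrix, and then to track the group-volume factors carefully across the four algebras via the parameter $\beta$. First I would reduce to the square case: write $\mathbf{X} = \mathbf{V}^{*}\mathbf{D}\mathbf{W}_{1}$ with $\mathbf{W}_{1}\in\mathcal{V}_{m,n}^{\beta}$, and observe that $\mathbf{X}\mathbf{X}^{*} = \mathbf{V}^{*}\mathbf{D}^{2}\mathbf{V} \in \mathfrak{P}_{m}^{\beta}$. The strategy is to change variables in two stages: first from $\mathbf{X}$ to the pair $(\mathbf{S},\mathbf{W}_{1})$ where $\mathbf{S} = \mathbf{X}\mathbf{X}^{*}$ lives in $\mathfrak{P}_{m}^{\beta}$ and $\mathbf{W}_{1}$ records the ``direction'', then from $\mathbf{S}$ to its eigen-decomposition $\mathbf{S} = \mathbf{V}^{*}\mathbf{D}^{2}\mathbf{V}$ with $\mathbf{D}^{2} = \diag(d_{1}^{2},\dots,d_{m}^{2})$.

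For the first stage I would invoke the standard factorization $\mathbf{X} = \mathbf{T}\mathbf{W}_{1}$ (or equivalently work through $\mathbf{X}^{*}\mathbf{X}$), whose Jacobian over a real normed division algebra reads
\begin{equation*}
  (d\mathbf{X}) = 2^{-m}|\mathbf{X}\mathbf{X}^{*}|^{\beta(n-m+1)/2 - 1}\,(d(\mathbf{X}\mathbf{X}^{*}))\,(\mathbf{W}_{1}d\mathbf{W}_{1}^{*}),
\end{equation*}
a result that follows from Lemma \ref{lemlt} together with the surface-measure decomposition on $\mathcal{V}_{m,n}^{\beta}$ and which is the $\beta$-unified version of the real/complex/quaternion identities cited (\citet{d:02}, \citet{k:84}). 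For the second stage I would use the eigenvalue Jacobian for $\mathbf{S}\in\mathfrak{P}_{m}^{\beta}$: with $\mathbf{S} = \mathbf{V}^{*}\mathbf{\Delta}\mathbf{V}$, $\mathbf{\Delta} = \diag(\delta_{1},\dots,\delta_{m})$, $\delta_{1}>\dots>\delta_{m}>0$,
\begin{equation*}
  (d\mathbf{S}) = \pi^{\tau'}\prod_{i<j}^{m}(\delta_{i}-\delta_{j})^{\beta}\,(d\mathbf{\Delta})\,(\mathbf{V}d\mathbf{V}^{*}),
\end{equation*}
where the prefactor $\pi^{\tau'}$ and the reduction of the orthogonal-group measure to $(\mathbf{V}d\mathbf{V}^{*})$ account for the stabilizer of a diagonal matrix in $\mathfrak{U}^{\beta}(m)$, which for $\beta=2,4,8$ is a torus of positive $\beta$-dimension (hence the nonzero $\tau$ values $-m,-2m,-4m$; for $\beta=1$ the stabilizer is finite and $\tau=0$). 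Finally I would set $\delta_{i} = d_{i}^{2}$, so that $(d\mathbf{\Delta}) = 2^{m}\prod_{i=1}^{m}d_{i}\,(d\mathbf{D})$ and $\delta_{i}-\delta_{j} = d_{i}^{2}-d_{j}^{2}$, substitute $|\mathbf{X}\mathbf{X}^{*}| = \prod d_{i}^{2}$ into the power $\beta(n-m+1)/2 - 1$, and collect all powers of $d_{i}$: the exponent becomes $2\big(\tfrac{\beta(n-m+1)}{2}-1\big) + 1 = \beta(n-m+1)-1$, matching \eqref{svd}, while the constants $2^{-m}$, $2^{m}$ and $\pi^{\tau}$ combine to give exactly the stated $2^{-m}\pi^{\tau}$.

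The main obstacle I anticipate is bookkeeping the group-measure normalizations uniformly in $\beta$: one must be precise about how $(\mathbf{V}d\mathbf{V}^{*})$ (Haar measure on $\mathfrak{U}^{\beta}(m)$) relates to the invariant measure on the flag/quotient manifold that actually appears, since the isotropy subgroup of a generic diagonal $\mathbf{D}$ is $\mathfrak{D}_{m}^{\beta}\cap\mathfrak{U}^{\beta}(m)$, whose volume contributes the $2^{m}$ and the $\beta$-dependent power of $\pi$ encoded in $\tau$. For the octonion case ($\beta=8$) there is the additional subtlety that $\mathfrak{O}$ is nonassociative, so ``matrix'' manipulations such as $\mathbf{V}^{*}\mathbf{V}=\mathbf{I}$ and the very meaning of $\mathfrak{U}^{8}(m)$ must be interpreted in the restricted sense in which these constructions make sense (essentially $m\le 2$, or through the formal Jacobian-calculus of \citet{k:84}); I would handle this exactly as the earlier Jacobian lemmas in the paper do, treating $\beta=8$ formally and citing \citet{b:02,d:02,k:84} for legitimacy. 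Once these normalization constants are pinned down, the rest is the routine substitution $\delta_i=d_i^2$ described above.
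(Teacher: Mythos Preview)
The paper does not actually prove Lemma~\ref{lemsvd}: it is listed among the preliminary Jacobians ``based on the work of \citet{k:84} and \citet{d:02}'' and ``proposed as extensions of real, complex or quaternion cases,'' with no argument beyond those citations. So there is no in-paper proof to compare against; your proposal supplies a derivation where the paper simply quotes the result.

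Your two-stage route (first Lemma~\ref{lemW} to pass from $\mathbf{X}$ to $(\mathbf{S},\mathbf{W}_{1})$, then the spectral Jacobian on $\mathfrak{P}_{m}^{\beta}$, then $\delta_{i}=d_{i}^{2}$) is the standard and correct way to obtain \eqref{svd}, and it is consistent with how the references you cite derive it. One caution on circularity: in this paper Lemma~\ref{lemW} is stated after Lemma~\ref{lemsvd}, so if you use it you should make clear that \eqref{w} is established independently (via a QR/Cholesky-type factorisation, as in \citet{d:02}), not via the SVD.

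There is, however, a genuine bookkeeping slip in your constants. As written, your spectral Jacobian carries only $\pi^{\tau'}$, and the substitution $\delta_{i}=d_{i}^{2}$ contributes $2^{m}\prod d_{i}$; combined with the $2^{-m}$ from \eqref{w} this gives $2^{-m}\cdot 2^{m}\cdot \pi^{\tau}=\pi^{\tau}$, not $2^{-m}\pi^{\tau}$. The missing $2^{-m}$ comes from the isotropy subgroup $\mathfrak{D}_{m}^{\beta}\cap\mathfrak{U}^{\beta}(m)\cong(\mathfrak{U}^{\beta}(1))^{m}$ in the spectral decomposition itself: its volume is $\bigl(2\pi^{\beta/2}/\Gamma[\beta/2]\bigr)^{m}$, whose $\pi$-content gives $\tau$ but whose factor $2^{m}$ must also be divided out, so the spectral Jacobian should read $2^{-m}\pi^{\tau}\prod_{i<j}(\delta_{i}-\delta_{j})^{\beta}(d\boldsymbol{\Delta})(\mathbf{V}d\mathbf{V}^{*})$ under the paper's conventions. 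You essentially say this in your ``main obstacle'' paragraph, but the displayed formula and the sentence ``the constants $2^{-m}$, $2^{m}$ and $\pi^{\tau}$ combine to give exactly the stated $2^{-m}\pi^{\tau}$'' are inconsistent with it. Fix the spectral Jacobian to include its own $2^{-m}$ and the arithmetic closes.
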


\begin{lem}\label{lemW}
Let $\mathbf{X} \in {\mathcal L}_{m,n}^{\beta}$, and  $\mathbf{S} = \mathbf{X}\mathbf{X}^{*}
\in \mathfrak{P}_{m}^{\beta}.$ Then
\begin{equation}\label{w}
    (d\mathbf{X}) = 2^{-m} |\mathbf{S}|^{\beta(n - m + 1)/2 - 1}
    (d\mathbf{S})(\mathbf{V}_{1}d\mathbf{V}_{1}^{*}).
\end{equation}
\end{lem}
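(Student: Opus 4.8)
The plan is to obtain \eqref{w} by combining the singular value decomposition of Lemma~\ref{lemsvd} with the spectral decomposition of the positive definite matrix $\mathbf{S} = \mathbf{X}\mathbf{X}^{*}$, so that the troublesome Stiefel factor $(\mathbf{V}d\mathbf{V}^{*})$ appearing in \eqref{svd} gets absorbed into $(d\mathbf{S})$ and cancels out. First I would write $\mathbf{X} = \mathbf{V}^{*}\mathbf{D}\mathbf{W}_{1}$ as in Lemma~\ref{lemsvd}, so that $\mathbf{S} = \mathbf{V}^{*}\mathbf{D}^{2}\mathbf{V}$; this exhibits $\mathbf{S}$ via its eigenvalue decomposition with eigenvalues $\ell_{i} = d_{i}^{2}$ and eigenvector matrix $\mathbf{V} \in \mathfrak{U}^{\beta}(m)$. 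The known Jacobian for the spectral decomposition on $\mathfrak{P}_{m}^{\beta}$ (a standard companion result to Lemmas~\ref{lemsvd} and~\ref{lemW}, derivable in the same $\beta$-unified framework of \citet{k:84} and \citet{d:02}) reads
\begin{equation*}
  (d\mathbf{S}) = 2^{-m}\,\pi^{\tau}\,\prod_{i<j}^{m}(\ell_{i}-\ell_{j})^{\beta}\,(d\mathbf{L})\,(\mathbf{V}d\mathbf{V}^{*}),
\end{equation*}
with $\mathbf{L} = \diag(\ell_{1},\dots,\ell_{m})$ and the same constant $\tau$ as in Lemma~\ref{lemsvd}.

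Next I would change variables from $\mathbf{D}$ to $\mathbf{L} = \mathbf{D}^{2}$, i.e. $\ell_{i} = d_{i}^{2}$, which contributes a factor $\prod_{i=1}^{m} 2 d_{i}$, so that $(d\mathbf{L}) = 2^{m}\prod_{i=1}^{m} d_{i}\,(d\mathbf{D})$; and I would rewrite $\ell_{i}-\ell_{j} = d_{i}^{2}-d_{j}^{2}$. Substituting the spectral-decomposition Jacobian into \eqref{svd} and equating the two expressions for $(d\mathbf{X})$, the factors $\pi^{\tau}$, the Vandermonde-type products $\prod_{i<j}(d_{i}^{2}-d_{j}^{2})^{\beta}$, the $(\mathbf{V}d\mathbf{V}^{*})$ factor, and the powers of $2$ all match up after the $\mathbf{D}\to\mathbf{L}$ substitution, and one is left solving for $(d\mathbf{X})$ in terms of $(d\mathbf{S})$ and $(\mathbf{W}_{1}d\mathbf{W}_{1}^{*})$. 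Tracking the power of $d_{i}$: \eqref{svd} carries $d_{i}^{\beta(n-m+1)-1}$, while the conversion of $(d\mathbf{L})$ back to $(d\mathbf{D})$ and of the spectral formula into $\mathbf{S}$-variables supplies $d_{i}^{-1}$, leaving $d_{i}^{\beta(n-m+1)-2} = (d_{i}^{2})^{\beta(n-m+1)/2 - 1} = \ell_{i}^{\beta(n-m+1)/2-1}$, and $\prod_{i}\ell_{i} = |\mathbf{S}|$. Collecting everything yields exactly \eqref{w}, identifying $\mathbf{V}_{1}$ with the eigenvector matrix $\mathbf{V}$.

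The main obstacle is not the bookkeeping of powers but making sure the spectral-decomposition Jacobian on $\mathfrak{P}_{m}^{\beta}$ is available with the correct constant $\pi^{\tau}$ and the correct $2^{-m}$ in the $\beta$-unified setting; this is where the octonion case ($\beta = 8$, $m\le 2$) and the non-associativity of $\mathfrak{O}$ require care, since the usual derivation via differentiating $\mathbf{S} = \mathbf{V}^{*}\mathbf{L}\mathbf{V}$ and forming $\mathbf{V}(d\mathbf{S})\mathbf{V}^{*}$ uses manipulations that must be justified entrywise. An alternative, cleaner route that sidesteps introducing the spectral decomposition separately is to use Lemma~\ref{lemlt} together with the polar-type factorization $\mathbf{X} = \mathbf{T}\mathbf{V}_{1}$ with $\mathbf{T}\in\mathfrak{T}_{L}^{\beta}(m)$ lower triangular and $\mathbf{V}_{1}\in\mathcal{V}_{m,n}^{\beta}$, compute $(d\mathbf{X})$ in terms of $(d\mathbf{T})(\mathbf{V}_{1}d\mathbf{V}_{1}^{*})$, then pass from $\mathbf{T}$ to $\mathbf{S}=\mathbf{T}\mathbf{T}^{*}$ using the standard triangular-to-Hermitian Jacobian $2^{m}\prod_{i}t_{ii}^{\beta(m-i+1)-1}$; but since the SVD route reuses Lemma~\ref{lemsvd} verbatim, I would present that one, flagging the spectral Jacobian as the single external ingredient.
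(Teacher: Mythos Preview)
The paper does not prove this lemma. It is stated in Section~\ref{sec2} as one of three preliminary Jacobians ``based on the work of \citet{k:84} and \citet{d:02}'', with no accompanying argument, so there is no proof in the paper to compare against; your proposal is supplying what the paper simply cites.

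Your main route --- feeding the SVD Jacobian of Lemma~\ref{lemsvd} into the spectral-decomposition Jacobian for $\mathbf{S}=\mathbf{V}^{*}\mathbf{D}^{2}\mathbf{V}$ so that $(\mathbf{V}d\mathbf{V}^{*})$ and the Vandermonde product get absorbed into $(d\mathbf{S})$ --- is sound, and the bookkeeping of the powers of $d_{i}$ and of $2$ is correct provided the spectral Jacobian really carries the constant $2^{-m}\pi^{\tau}$ you posit. There is one slip, however: in your last sentence you identify $\mathbf{V}_{1}$ with the eigenvector matrix $\mathbf{V}\in\mathfrak{U}^{\beta}(m)$. That cannot be right, since $\mathbf{V}_{1}$ in \eqref{w} lives on $\mathcal{V}_{m,n}^{\beta}$. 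After $(\mathbf{V}d\mathbf{V}^{*})$ has been swallowed into $(d\mathbf{S})$, the Stiefel factor that remains is $(\mathbf{W}_{1}d\mathbf{W}_{1}^{*})$; writing the polar factorisation $\mathbf{X}=\mathbf{S}^{1/2}\mathbf{V}_{1}$ with $\mathbf{S}^{1/2}=\mathbf{V}^{*}\mathbf{D}\mathbf{V}$ gives $\mathbf{V}_{1}=\mathbf{V}^{*}\mathbf{W}_{1}$, and left-invariance of the measure on $\mathcal{V}_{m,n}^{\beta}$ under $\mathfrak{U}^{\beta}(m)$ then yields $(\mathbf{V}_{1}d\mathbf{V}_{1}^{*})=(\mathbf{W}_{1}d\mathbf{W}_{1}^{*})$.

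Your alternative route --- the factorisation $\mathbf{X}=\mathbf{T}\mathbf{V}_{1}$ with $\mathbf{T}\in\mathfrak{T}_{L}^{\beta}(m)$, followed by $\mathbf{S}=\mathbf{T}\mathbf{T}^{*}$ and the triangular-to-Hermitian Jacobian --- is in fact the standard derivation in the references the paper cites and avoids importing the spectral Jacobian as a separate ingredient; if a self-contained proof were desired, that is the one to present.
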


\section{Matricvariate Pearson type II distribution}\label{sec3}

In the real case, the \emph{matricvariate Pearson type II distribution} (also known in the
literature as \textbf{\emph{matricvariate inverted $T$ distribution}}) was studied in detail by
\citet{di:67} and \citet{c:96}, see also \citet{p:82}. This distribution was previously studied
by \citet{k:59}, also in the real case.
\begin{thm}\label{teo1}
Let $\mathbf{R}\in {\mathcal L}_{m,n}^{\beta}$ defined as
$$
  \mathbf{R} = \mathbf{L}^{-1}\mathbf{X}
$$
where $\mathbf{L}$ is any square root of $\mathbf{U} = (\mathbf{U}_{1}+\mathbf{XX}^{*})$ such
that $\mathbf{LL}^{*} = \mathbf{U}$, $\mathbf{U}_{1} \sim \mathcal{W}_{m}^{\beta}(\nu,
\mbox{\textbf{\cyr I}})$, $\nu > \beta(m-1)$, independent of $\mathbf{X} \sim \mathcal{N}_{m
\times n}^{\beta}(\mathbf{0}, \mathbf{I}_{m}\otimes \mbox{\textbf{\cyr I}})$, and
$\mbox{\textbf{\cyr I}} \in \mathfrak{P}_{m}^{\beta}$. Then $\mathbf{U} \sim
\mathcal{W}_{m}^{\beta}(\nu+n, \mbox{\textbf{\cyr I}})$ independently of $\mathbf{R}$.
Furthermore, the density of $\mathbf{R}$ is
\begin{equation}\label{IT}
    \frac{\Gamma_{m}^{\beta}[\beta(n+\nu)/2]}{\pi^{mn\beta/2}\Gamma_{m}^{\beta}[\beta \nu/2]}
    |\mathbf{I}_{m} - \mathbf{RR}^{*}|^{\beta(\nu-m+1)/2-1}, \quad \mathbf{I}_{m} -
    \mathbf{RR}^{*} \in \mathfrak{P}_{m}^{\beta}
\end{equation}
which is termed the \emph{matricvariate Pearson type II distribution}\footnote{In the
literature it is customary to use the real matricvariate Pearson type II distribution, complex
matricvariate Pearson type II distribution, quaternion matricvariate Pearson type II
distribution and octonion matricvariate Pearson type II distribution,; here, however, we use
simply matricvariate Pearson type II distribution as the generic term.}.
\end{thm}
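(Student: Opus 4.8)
The plan is to derive the joint law of $(\mathbf{U},\mathbf{R})$ from that of $(\mathbf{U}_{1},\mathbf{X})$ by a single change of variables, after which the distribution of $\mathbf{U}$, the independence of $\mathbf{U}$ and $\mathbf{R}$, and the density (\ref{IT}) all drop out together. By independence, the joint density of $(\mathbf{U}_{1},\mathbf{X})$ is the product of the $\mathcal{W}_{m}^{\beta}(\nu,\mbox{\textbf{\cyr I}})$ density of $\mathbf{U}_{1}$, proportional to $|\mathbf{U}_{1}|^{\beta(\nu-m+1)/2-1}\etr\{-\tfrac{1}{2}\mbox{\textbf{\cyr I}}^{-1}\mathbf{U}_{1}\}$, and the $\mathcal{N}_{m\times n}^{\beta}(\mathbf{0},\mathbf{I}_{m}\otimes\mbox{\textbf{\cyr I}})$ density of $\mathbf{X}$, proportional to $\etr\{-\tfrac{1}{2}\mbox{\textbf{\cyr I}}^{-1}\mathbf{X}\mathbf{X}^{*}\}$, each carrying its usual normalising constant (involving $\Gamma_{m}^{\beta}[\beta\nu/2]$, $|\mbox{\textbf{\cyr I}}|$ and powers of $2\pi$).

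First I would compute the Jacobian of $(\mathbf{U}_{1},\mathbf{X})\mapsto(\mathbf{U},\mathbf{R})$ in two stages. Holding $\mathbf{X}$ fixed, $\mathbf{U}=\mathbf{U}_{1}+\mathbf{X}\mathbf{X}^{*}$ is merely a translation in $\mathfrak{S}_{m}^{\beta}$, so the Jacobian is unity, $(d\mathbf{U}_{1})=(d\mathbf{U})$. Holding $\mathbf{U}$ (hence $\mathbf{L}$) fixed, $\mathbf{X}=\mathbf{L}\mathbf{R}$, so Lemma \ref{lemlt} with $\mathbf{A}=\mathbf{L}$, $\mathbf{B}=\mathbf{I}_{n}$ and $\mathbf{C}=\mathbf{0}$ gives $(d\mathbf{X})=|\mathbf{L}^{*}\mathbf{L}|^{\beta n/2}(d\mathbf{R})=|\mathbf{U}|^{\beta n/2}(d\mathbf{R})$, using $|\mathbf{L}^{*}\mathbf{L}|=|\mathbf{L}\mathbf{L}^{*}|=|\mathbf{U}|$. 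Since $\mathbf{U}$ is one of the new coordinates the overall Jacobian is block triangular, and the two stages compose to $(d\mathbf{U}_{1})(d\mathbf{X})=|\mathbf{U}|^{\beta n/2}(d\mathbf{U})(d\mathbf{R})$.

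Next I would substitute back. From $\mathbf{U}_{1}=\mathbf{U}-\mathbf{X}\mathbf{X}^{*}=\mathbf{L}(\mathbf{I}_{m}-\mathbf{R}\mathbf{R}^{*})\mathbf{L}^{*}$ one gets $|\mathbf{U}_{1}|=|\mathbf{U}|\,|\mathbf{I}_{m}-\mathbf{R}\mathbf{R}^{*}|$ and $\tr(\mbox{\textbf{\cyr I}}^{-1}\mathbf{U}_{1})+\tr(\mbox{\textbf{\cyr I}}^{-1}\mathbf{X}\mathbf{X}^{*})=\tr(\mbox{\textbf{\cyr I}}^{-1}\mathbf{U})$. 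Gathering the power of $|\mathbf{U}|$ coming from $|\mathbf{U}_{1}|^{\beta(\nu-m+1)/2-1}$ with the Jacobian factor $|\mathbf{U}|^{\beta n/2}$ produces the exponent $\beta(\nu+n-m+1)/2-1$, so the joint density of $(\mathbf{U},\mathbf{R})$ takes the form
\[
  c\,|\mathbf{U}|^{\beta(\nu+n-m+1)/2-1}\etr\{-\tfrac{1}{2}\mbox{\textbf{\cyr I}}^{-1}\mathbf{U}\}\,|\mathbf{I}_{m}-\mathbf{R}\mathbf{R}^{*}|^{\beta(\nu-m+1)/2-1},
\]
on $\mathbf{U}\in\mathfrak{P}_{m}^{\beta}$ and $\mathbf{I}_{m}-\mathbf{R}\mathbf{R}^{*}\in\mathfrak{P}_{m}^{\beta}$; here the original constraint $\mathbf{U}_{1}\in\mathfrak{P}_{m}^{\beta}$ splits into exactly these two, with no coupling between $\mathbf{U}$ and $\mathbf{R}$. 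The $\mathbf{U}$-factor is proportional to the $\mathcal{W}_{m}^{\beta}(\nu+n,\mbox{\textbf{\cyr I}})$ density, which simultaneously gives $\mathbf{U}\sim\mathcal{W}_{m}^{\beta}(\nu+n,\mbox{\textbf{\cyr I}})$ and its independence of $\mathbf{R}$. Integrating $\mathbf{U}$ out — its integral is a multiple of $\Gamma_{m}^{\beta}[\beta(\nu+n)/2]$, read off from the integral representation of $\Gamma_{m}^{\beta}$ — the surviving powers of $2$ and of $|\mbox{\textbf{\cyr I}}|$ cancel against the constants already present, and (\ref{IT}) is what remains.

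The computation itself is essentially bookkeeping; the one point that genuinely needs care is that the square root $\mathbf{L}$ of $\mathbf{U}$ is not unique: replacing $\mathbf{L}$ by $\mathbf{L}\mathbf{Q}$ with $\mathbf{Q}\in\mathfrak{U}^{\beta}(m)$ replaces $\mathbf{R}$ by $\mathbf{Q}^{*}\mathbf{R}$, so one must fix a measurable selection $\mathbf{U}\mapsto\mathbf{L}(\mathbf{U})$ at the outset. The Jacobian argument above is valid for any such selection, and since $|\mathbf{I}_{m}-\mathbf{Q}^{*}\mathbf{R}\mathbf{R}^{*}\mathbf{Q}|=|\mathbf{I}_{m}-\mathbf{R}\mathbf{R}^{*}|$ the density (\ref{IT}) does not depend on it. It then only remains to observe that $\mathbf{R}\in\mathcal{L}_{m,n}^{\beta}$ almost surely, which is inherited from $\mathbf{X}$ because $\mathbf{L}$ is invertible, and that in the quaternion and octonion cases the identities $|\mathbf{L}^{*}\mathbf{L}|=|\mathbf{L}\mathbf{L}^{*}|$ and $|\mathbf{U}_{1}|=|\mathbf{U}|\,|\mathbf{I}_{m}-\mathbf{R}\mathbf{R}^{*}|$ are to be read with the determinant appropriate to $\mathfrak{F}$, which is already built into Lemma \ref{lemlt}.
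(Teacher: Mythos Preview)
Your argument is essentially identical to the paper's: write down the joint density of $(\mathbf{U}_{1},\mathbf{X})$, change variables to $(\mathbf{U},\mathbf{R})$ via $\mathbf{U}_{1}=\mathbf{U}-\mathbf{XX}^{*}$ and $\mathbf{X}=\mathbf{LR}$, use Lemma~\ref{lemlt} for the Jacobian, use $|\mathbf{U}_{1}|=|\mathbf{U}|\,|\mathbf{I}_{m}-\mathbf{RR}^{*}|$, and read off the factorisation. Your version is actually more careful than the paper's---the remarks on the measurable choice of square root, the decoupling of the support constraints, and the tracking of constants through the $\Gamma_{m}^{\beta}$ integral are all points the paper leaves implicit---and the only cosmetic discrepancy is that the paper's Wishart/normal densities carry a factor $\beta$ in the exponent of $\etr$, which does not affect the argument once one works up to normalisation.
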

\begin{proof}
From \citet{k:84} and \citet{jdggj:09a,jdggj:10b} the joint density of $\mathbf{U}_{1}$ and
$\mathbf{X}$ is
$$
  \propto |\mathbf{U}_{1}|^{\beta(\nu-m+1)/2-1}\etr\{-\beta\mbox{\textbf{\cyr I}}^{-1}
  (\mathbf{U}_{1} + \mathbf{XX}^{*})/2\},
$$
where the constant of proportionality is
$$
  c = \frac{1}{(2\beta^{-1})^{\beta m\nu/2} \Gamma_{m}^{\beta}[\beta \nu/2] |\mbox{\textbf{\cyr I}}|^{\beta \nu/2}}
      \ \cdot \ \frac{1}{(2\pi\beta^{-1})^{\beta mn/2} |\mbox{\textbf{\cyr I}}|^{\beta n/2}}.
$$
Making the change of variable $\mathbf{U}_{1} = (\mathbf{U} -\mathbf{XX}^{*})$ and $\mathbf{X}
= \mathbf{L}\mathbf{R}$, where $\mathbf{U} = \mathbf{LL}^{*}$, then by (\ref{lt})
$$
  (d\mathbf{U}_{1})(d\mathbf{X}) = |\mathbf{LL}^{*}|^{\beta n/2}(d\mathbf{U})(d\mathbf{R})
    = |\mathbf{U}|^{\beta n/2}(d\mathbf{U})(d\mathbf{R}),
$$
and observing that $|\mathbf{U}_{1}| = |\mathbf{U} - \mathbf{XX}^{*}| = |\mathbf{U} -
\mathbf{LRR}^{*}\mathbf{L}^{*}| = |\mathbf{U}||\mathbf{I}_{m} - \mathbf{RR}^{*}|$, the joint
density of $\mathbf{U}$ and $\mathbf{R}$ is
$$
  \propto |\mathbf{I}_{m} - \mathbf{RR}^{*}|^{\beta(\nu-m+1)/2-1} |\mathbf{U}|^{\beta(\nu+n-m+1)/2-1}
  \etr\{-\beta\mbox{\textbf{\cyr I}}^{-1}\mathbf{U}/2\}. \mbox{\qed}
$$
\end{proof}
Similarly to \citet{di:67}, (\ref{IT}) is alternatively given by
\begin{equation}\label{IT2}
    \frac{\Gamma_{n}^{\beta}[\beta(n+\nu)/2]}{\pi^{mn\beta/2}\Gamma_{n}^{\beta}[\beta(n+\nu-m)/2]}
    |\mathbf{I}_{n} - \mathbf{R}^{*}\mathbf{R}|^{\beta(\nu-m+1)/2-1}, \quad \mathbf{I}_{n} -
    \mathbf{R}^{*}\mathbf{R} \in \mathfrak{P}_{n}^{\beta}.
\end{equation}
\begin{cor}\label{cor1}
Let $\mathbf{Q} = (\mathbf{M}^{*})^{-1}\mathbf{RN}^{-1} + \boldsymbol{\mu}$, $\mathbf{R}$ as in
Theorem \ref{teo1}, $\mathbf{M}$ and $\mathbf{N}$ are any square root of the constant matrices
\textbf{\cyr B}$ = \mathbf{M}\mathbf{M}^{*}\in \mathfrak{P}_{m}^{\beta}$ and \textbf{\cyr D}$=
\mathbf{N}\mathbf{N}^{*}\in \mathfrak{P}_{n}^{\beta}$, respectively, and $\boldsymbol{\mu} \in
\mathcal{L}_{m,n}^{\beta}$ is constant. Then, from (\ref{IT}) and (\ref{IT2}) the density of
$\mathbf{Q}$ is given by
$$
  \frac{\Gamma_{m}^{\beta}[\beta(n+\nu)/2]}{\pi^{mn\beta/2}\Gamma_{m}^{\beta}[\beta \nu/2]}
  \frac{|\mbox{\textbf{\cyr B}}|^{\beta(\nu+n-m+1)/2-1}}{|\mbox{\textbf{\cyr D}}|^{\beta m/2}}|\mbox{\textbf{\cyr B}}^{-1} -
  (\mathbf{Q}-\boldsymbol{\mu})\mbox{\textbf{\cyr D}}^{-1}(\mathbf{Q}-\boldsymbol{\mu})^{*}|^{\beta(\nu-m+1)/2-1},
$$
\par\noindent\hfill\mbox{$\mbox{\textbf{\cyr B}}^{-1} - (\mathbf{Q}-\boldsymbol{\mu})\mbox{\textbf{\cyr
D}}^{-1}(\mathbf{Q}-\boldsymbol{\mu})^{*} \in \mathfrak{P}_{m}^{\beta}$,}\par\noindent %
and as
$$
  \frac{\Gamma_{n}^{\beta}[\beta(n+\nu)/2]}{\pi^{mn\beta/2}\Gamma_{n}^{\beta}[\beta(\nu+n-m)/2]}
  \frac{|\mbox{\textbf{\cyr B}}|^{\beta\nu/2}}{|\mbox{\textbf{\cyr D}}|^{\beta(\nu+1)/2-1}}|\mbox{\textbf{\cyr D}} -
  (\mathbf{Q}-\boldsymbol{\mu})^{*}\mbox{\textbf{\cyr B}}(\mathbf{Q}-\boldsymbol{\mu})|^{\beta(\nu-m+1)/2-1},
$$
\par\noindent\hfill\mbox{
$\mbox{\textbf{\cyr D}} - (\mathbf{Q}-\boldsymbol{\mu})^{*}\mbox{\textbf{\cyr
B}}(\mathbf{Q}-\boldsymbol{\mu}) \in \mathfrak{P}_{n}^{\beta}$.}\par\indent\newline%
This fact shall denoted as $\mathbf{Q} \sim \mathcal{P}II_{m \times n}^{\beta}(\nu,
\boldsymbol{\mu}, \mbox{\textbf{\cyr B}},\mbox{\textbf{\cyr D}})$ (and of course $\mathbf{R}
\sim \mathcal{P}II_{m \times n}^{\beta}(\nu, \boldsymbol{0}, \mathbf{I}_{m},\mathbf{I}_{n})$).
\end{cor}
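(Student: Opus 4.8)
The statement asserts that $\mathbf{Q}$ is an invertible affine image of the matrix $\mathbf{R}\sim\mathcal{P}II^{\beta}_{m\times n}(\nu,\mathbf{0},\mathbf{I}_{m},\mathbf{I}_{n})$ of Theorem \ref{teo1}, whose density is (\ref{IT}), equivalently (\ref{IT2}); so the plan is simply to push that density forward through the map $\mathbf{R}\mapsto\mathbf{Q}$, reading off the Jacobian from Lemma \ref{lemlt}. First I would invert the defining relation: since $\mathbf{M}\in\mathcal{L}_{m,m}^{\beta}$ and $\mathbf{N}\in\mathcal{L}_{n,n}^{\beta}$ are nonsingular, the relation defining $\mathbf{Q}$ in terms of $\mathbf{R}$ solves for $\mathbf{R}$ as $\mathbf{R}=\mathbf{M}^{*}(\mathbf{Q}-\boldsymbol{\mu})\mathbf{N}$, a bijection of $\mathcal{L}_{m,n}^{\beta}$ onto itself. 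Applying Lemma \ref{lemlt} to this affine map and using the determinant identities $|\mathbf{M}^{*}\mathbf{M}|=|\mathbf{M}\mathbf{M}^{*}|$ and $|\mathbf{N}^{*}\mathbf{N}|=|\mathbf{N}\mathbf{N}^{*}|$ yields a relation of the form
$$
  (d\mathbf{R})=|\mathbf{M}\mathbf{M}^{*}|^{\beta n/2}\,|\mathbf{N}\mathbf{N}^{*}|^{-\beta m/2}\,(d\mathbf{Q}),
$$
which already accounts for the scalar prefactors in powers of $|\mathbf{M}\mathbf{M}^{*}|$ and $|\mathbf{N}\mathbf{N}^{*}|$ in both displayed densities.

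Next I would rewrite the argument of the density. Substituting the expression for $\mathbf{R}$ into $\mathbf{I}_{m}-\mathbf{R}\mathbf{R}^{*}$ and using $\mathbf{M}^{*}(\mathbf{M}\mathbf{M}^{*})^{-1}\mathbf{M}=\mathbf{I}_{m}$, one finds $\mathbf{I}_{m}-\mathbf{R}\mathbf{R}^{*}=\mathbf{M}^{*}\mathbf{G}\mathbf{M}$, where $\mathbf{G}$ is exactly the Hermitian matrix appearing in the first displayed density; taking $|\cdot|$ and using $|\mathbf{M}^{*}\mathbf{G}\mathbf{M}|=|\mathbf{M}\mathbf{M}^{*}|\,|\mathbf{G}|$ produces a further factor $|\mathbf{M}\mathbf{M}^{*}|^{\beta(\nu-m+1)/2-1}$, and adding to it the exponent $\beta n/2$ from the Jacobian gives the exponent $\beta(\nu+n-m+1)/2-1$ of the statement. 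The support constraint transforms the same way, since $\mathbf{I}_{m}-\mathbf{R}\mathbf{R}^{*}\in\mathfrak{P}_{m}^{\beta}$ is preserved under congruence by the nonsingular $\mathbf{M}$, hence is equivalent to the stated membership $\mathbf{G}\in\mathfrak{P}_{m}^{\beta}$. The second displayed density is obtained identically, either by starting from the dual form (\ref{IT2}) and factoring $\mathbf{I}_{n}-\mathbf{R}^{*}\mathbf{R}$ through $\mathbf{N}^{*}(\cdot)\mathbf{N}$, or by transforming the first expression via the determinant swap $|\mathbf{I}_{m}+\mathbf{A}\mathbf{B}|=|\mathbf{I}_{n}+\mathbf{B}\mathbf{A}|$ that was already used to pass from (\ref{IT}) to (\ref{IT2}); one then records the conclusion as $\mathbf{Q}\sim\mathcal{P}II_{m\times n}^{\beta}(\nu,\boldsymbol{\mu},\mathbf{M}\mathbf{M}^{*},\mathbf{N}\mathbf{N}^{*})$, and the parenthetical statement about $\mathbf{R}$ is the special case $\mathbf{M}=\mathbf{I}_{m}$, $\mathbf{N}=\mathbf{I}_{n}$, $\boldsymbol{\mu}=\mathbf{0}$.

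The computation is, line by line, elementary substitution and exponent arithmetic; the one place needing genuine care is that every step rests on properties of the $\beta$-valued determinant $|\cdot|$ — multiplicativity up to conjugation, the invariance $|\mathbf{M}^{*}\mathbf{M}|=|\mathbf{M}\mathbf{M}^{*}|$, the behaviour under the congruence $\mathbf{Z}\mapsto\mathbf{M}^{*}\mathbf{Z}\mathbf{M}$, and the $\mathbf{A}\mathbf{B}\leftrightarrow\mathbf{B}\mathbf{A}$ swap — none of which is the naive real-determinant identity in the quaternion case, and still less in the non-associative octonion case. I would justify each such identity by appeal to the construction of $|\cdot|$ and of the measure $(d\cdot)$ recorded in Section \ref{sec2} and in \citet{k:84}, \citet{d:02} and \citet{gr:87}: these are precisely the facts underlying Lemmas \ref{lemlt}--\ref{lemW} and the beta integral (\ref{beta}), so no genuinely new input is needed, and, granting them, the corollary follows by direct substitution.
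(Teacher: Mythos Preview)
Your approach is exactly the paper's: invert the affine map, apply Lemma~\ref{lemlt} for the Jacobian, substitute $\mathbf{R}=\mathbf{M}^{*}(\mathbf{Q}-\boldsymbol{\mu})\mathbf{N}$ into $|\mathbf{I}_{m}-\mathbf{RR}^{*}|$ and factor out $|\mbox{\textbf{\cyr B}}|$, and then use the determinant swap to pass between the two displayed forms. The paper's own proof is just the two lines recording the Jacobian and the determinant identity
$$
  |\mbox{\textbf{\cyr B}}^{-1} - (\mathbf{Q}-\boldsymbol{\mu})\mbox{\textbf{\cyr D}}^{-1}(\mathbf{Q}-\boldsymbol{\mu})^{*}|
  = |\mbox{\textbf{\cyr B}}|^{-1}|\mbox{\textbf{\cyr D}}|^{-1}
  |\mbox{\textbf{\cyr D}} - (\mathbf{Q}-\boldsymbol{\mu})^{*}\mbox{\textbf{\cyr B}}(\mathbf{Q}-\boldsymbol{\mu})|,
$$
so conceptually there is nothing to add.

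There is, however, a slip in your displayed Jacobian. From $\mathbf{R}=\mathbf{M}^{*}(\mathbf{Q}-\boldsymbol{\mu})\mathbf{N}$ and Lemma~\ref{lemlt} (with $\mathbf{A}=\mathbf{M}^{*}$, $\mathbf{B}=\mathbf{N}$) one gets
$$
  (d\mathbf{R})=|\mathbf{M}\mathbf{M}^{*}|^{\beta n/2}\,|\mathbf{N}\mathbf{N}^{*}|^{\beta m/2}\,(d\mathbf{Q})
  = |\mbox{\textbf{\cyr B}}|^{\beta n/2}\,|\mbox{\textbf{\cyr D}}|^{\beta m/2}\,(d\mathbf{Q}),
$$
with \emph{both} exponents positive; your exponent $-\beta m/2$ on $|\mathbf{N}\mathbf{N}^{*}|$ is a sign error. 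With the wrong sign the exponent bookkeeping you describe afterwards would not reproduce the stated prefactors, so this is not cosmetic. Fix that sign and the rest of your argument goes through verbatim.
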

\begin{proof}
The proof follows observing that, by (\ref{lt})
$$
  (d\mathbf{R}) = |\mathbf{M}\mathbf{M}^{*}|^{\beta n/2} |\mathbf{N}\mathbf{N}^{*}|^{\beta
    m/2}(d\mathbf{Q}) = |\mbox{\textbf{\cyr B}}|^{\beta n/2} |\mbox{\textbf{\cyr D}}|^{\beta
    m/2}(d\mathbf{Q})
$$
and that
$$
  |\mbox{\textbf{\cyr B}}^{-1} - (\mathbf{Q}-\boldsymbol{\mu})\mbox{\textbf{\cyr
  D}}^{-1}(\mathbf{Q}-\boldsymbol{\mu})^{*}| = |\mbox{\textbf{\cyr B}}|^{-1}|\mbox{\textbf{\cyr D}}|^{-1}
  |\mbox{\textbf{\cyr D}} - (\mathbf{Q}-\boldsymbol{\mu})^{*} \mbox{\textbf{\cyr
  B}}(\mathbf{Q}-\boldsymbol{\mu})|. \mbox{\qed}
$$
\end{proof}
\begin{cor}\label{cor2}
Assume that $\mathbf{Q} \sim \mathcal{P}II_{m \times n}^{\beta}(\nu, \boldsymbol{\mu},
\mbox{\textbf{\cyr B}},\mbox{\textbf{\cyr D}})$, then
$$
 \mathbf{Q}^{*} \sim \mathcal{P}II_{n \times m}^{\beta}(\nu, \boldsymbol{\mu}^{*},
 \mbox{\textbf{\cyr D}}^{-1},\mbox{\textbf{\cyr B}}^{-1}).
$$
\end{cor}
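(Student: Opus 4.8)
The plan is to derive the density of $\mathbf{Q}^{*}$ directly from the two equivalent density expressions for $\mathbf{Q}$ obtained in Corollary \ref{cor1}, rather than starting again from Theorem \ref{teo1}. The key observation is that conjugate transposition swaps the two forms of the density: the first expression in Corollary \ref{cor1} (involving $|\mbox{\textbf{\cyr B}}^{-1} - (\mathbf{Q}-\boldsymbol{\mu})\mbox{\textbf{\cyr D}}^{-1}(\mathbf{Q}-\boldsymbol{\mu})^{*}|$) should, after the substitution $\mathbf{Q} \mapsto \mathbf{Q}^{*}$, $\boldsymbol{\mu}\mapsto\boldsymbol{\mu}^{*}$, $\mbox{\textbf{\cyr B}}\mapsto\mbox{\textbf{\cyr D}}^{-1}$, $\mbox{\textbf{\cyr D}}\mapsto\mbox{\textbf{\cyr B}}^{-1}$, coincide with the second expression, and vice versa.

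First I would set $\mathbf{P} = \mathbf{Q}^{*}$ and simply compute what the definition $\mathbf{P} \sim \mathcal{P}II_{n\times m}^{\beta}(\nu,\boldsymbol{\mu}^{*},\mbox{\textbf{\cyr D}}^{-1},\mbox{\textbf{\cyr B}}^{-1})$ predicts: substitute the parameter quadruple $(\nu,\boldsymbol{\mu}^{*},\mbox{\textbf{\cyr D}}^{-1},\mbox{\textbf{\cyr B}}^{-1})$ into both display lines of Corollary \ref{cor1}, with $m$ and $n$ interchanged (since $\mathbf{P}$ is $n\times m$). Second, I would take the actual density of $\mathbf{Q}^{*}$ by a trivial change of variable — since $\mathbf{Q}\mapsto\mathbf{Q}^{*}$ is a linear isometry on $\mathcal{L}_{m,n}^{\beta}$ with unit Jacobian, the density of $\mathbf{Q}^{*}$ at a point $\mathbf{P}$ equals the density of $\mathbf{Q}$ at $\mathbf{P}^{*}$. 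Third, I would plug $\mathbf{P}^{*}$ into, say, the \emph{second} form of the Corollary \ref{cor1} density for $\mathbf{Q}$ and match it term-by-term against the \emph{first} form of the predicted density for $\mathbf{P}$: the argument $|\mbox{\textbf{\cyr D}} - (\mathbf{P}^{*}-\boldsymbol{\mu})^{*}\mbox{\textbf{\cyr B}}(\mathbf{P}^{*}-\boldsymbol{\mu})|$ becomes $|\mbox{\textbf{\cyr D}} - (\mathbf{P}-\boldsymbol{\mu}^{*})\mbox{\textbf{\cyr B}}(\mathbf{P}-\boldsymbol{\mu}^{*})^{*}| = |(\mbox{\textbf{\cyr B}}^{-1})^{-1} - (\mathbf{P}-\boldsymbol{\mu}^{*})(\mbox{\textbf{\cyr D}}^{-1})^{-1}(\mathbf{P}-\boldsymbol{\mu}^{*})^{*}|$, which is exactly the quadratic form appearing in the first line of Corollary \ref{cor1} with the transposed parameters, and the exponent $\beta(\nu-m+1)/2-1$ is unchanged under $m\leftrightarrow n$ in the relevant sense because it is the same in both \eqref{IT} and \eqref{IT2}.

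The remaining work is to check that the normalising constants also match. I would verify that $\Gamma_{m}^{\beta}[\beta(n+\nu)/2]/(\pi^{mn\beta/2}\Gamma_{m}^{\beta}[\beta(\nu+n-m)/2])$ together with the power-of-determinant prefactors $|\mbox{\textbf{\cyr B}}|^{\beta\nu/2}/|\mbox{\textbf{\cyr D}}|^{\beta(\nu+1)/2-1}$ transform, under the parameter relabelling, into the constant displayed in the first line of Corollary \ref{cor1} for an $n\times m$ matrix with shapes $\mbox{\textbf{\cyr D}}^{-1}$, $\mbox{\textbf{\cyr B}}^{-1}$. This is a bookkeeping check using only $|\mbox{\textbf{\cyr B}}^{-1}| = |\mbox{\textbf{\cyr B}}|^{-1}$, $|\mbox{\textbf{\cyr D}}^{-1}| = |\mbox{\textbf{\cyr D}}|^{-1}$, and the matching of the two Gamma-function prefactors already built into \eqref{IT} versus \eqref{IT2}. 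Alternatively — and this may be the cleanest route — one can bypass the constants entirely by noting that $\mathbf{Q}^{*}$ is supported on $\mathcal{L}_{n,m}^{\beta}$ and its density is, up to a constant, $|\mbox{\textbf{\cyr B}}^{-1} - (\mathbf{Q}^{*}-\boldsymbol{\mu}^{*})\mbox{\textbf{\cyr D}}^{-1}(\mathbf{Q}^{*}-\boldsymbol{\mu}^{*})^{*}|^{\beta(\nu-m+1)/2-1}$ rewritten via the determinant identity in the proof of Corollary \ref{cor1}; since a Pearson type II density is uniquely determined by its kernel and support, $\mathbf{Q}^{*} \sim \mathcal{P}II_{n\times m}^{\beta}(\nu,\boldsymbol{\mu}^{*},\mbox{\textbf{\cyr D}}^{-1},\mbox{\textbf{\cyr B}}^{-1})$ follows immediately.

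The main obstacle I anticipate is purely notational: keeping the roles of $m$ and $n$, and of the two covariance-type matrices, straight while the exponents $\beta(\nu+n-m)/2$, $\beta(\nu-m+1)/2-1$ etc. are shuffled; in particular one must confirm that the exponent $\beta(\nu-m+1)/2-1$ genuinely does \emph{not} pick up an $m\leftrightarrow n$ swap (it does not, because \eqref{IT} and \eqref{IT2} carry the identical exponent), so that the transposed object is again a \emph{bona fide} Pearson type II density with the same shape parameter $\nu$. Once that is confirmed, everything else is the determinant identity $|\mbox{\textbf{\cyr B}}^{-1} - \mathbf{X}\mbox{\textbf{\cyr D}}^{-1}\mathbf{X}^{*}| = |\mbox{\textbf{\cyr B}}|^{-1}|\mbox{\textbf{\cyr D}}|^{-1}|\mbox{\textbf{\cyr D}} - \mathbf{X}^{*}\mbox{\textbf{\cyr B}}\mathbf{X}|$ already used above, applied with $\mathbf{X} = \mathbf{Q}^{*}-\boldsymbol{\mu}^{*}$.
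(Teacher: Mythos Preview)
Your approach is exactly the one the paper uses: the paper's entire proof is the single sentence ``The proof follows immediately from the two expressions for the density of $\mathbf{Q}$ in Corollary~\ref{cor1}.'' Your plan to take the second form of the density, substitute $\mathbf{P}^{*}$ for $\mathbf{Q}$, and recognise the result as the first form of a Pearson~II density for an $n\times m$ matrix is precisely that argument written out in detail.

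There is, however, a genuine gap in the step you flag as ``purely notational.'' Your justification that the exponent $\beta(\nu-m+1)/2-1$ ``does not pick up an $m\leftrightarrow n$ swap'' because \eqref{IT} and \eqref{IT2} share the same exponent is a non sequitur: those two formulas both describe an $m\times n$ random matrix with shape parameter $\nu$, so of course they agree. The relevant question is what the exponent is for a $\mathcal{P}II_{n\times m}^{\beta}(\nu',\cdot,\cdot,\cdot)$ density, and by the very definition in Corollary~\ref{cor1} (applied with the roles of $m$ and $n$ exchanged) that exponent is $\beta(\nu'-n+1)/2-1$. Matching this against the exponent $\beta(\nu-m+1)/2-1$ carried by the density of $\mathbf{Q}^{*}$ forces $\nu'=\nu+n-m$, not $\nu'=\nu$. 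A quick sanity check in the real case $\beta=1$, $m=1$, $n=2$, $\nu=3$, $\boldsymbol{\mu}=\mathbf{0}$, $\mbox{\textbf{\cyr B}}=1$, $\mbox{\textbf{\cyr D}}=\mathbf{I}_{2}$ confirms this: $\mathbf{R}^{*}$ has density proportional to $(1-\|\mathbf{R}\|^{2})^{1/2}$, which is the $\mathcal{P}II_{2\times1}^{1}(4,\cdot,\cdot,\cdot)$ kernel, not the $\mathcal{P}II_{2\times1}^{1}(3,\cdot,\cdot,\cdot)$ one. So either the printed statement carries a typo in the first parameter, or your bookkeeping must uncover and resolve this shift rather than assert it away; in either case the ``cleanest route'' of matching kernels up to a constant does not close on $\nu'=\nu$.
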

\begin{proof}
The proof follows immediately from the two expressions for the density of $\mathbf{Q}$ in
Corollary \ref{cor1}. \qed
\end{proof}
\begin{cor}\label{cor3}
Let $\mathbf{R}\in {\mathcal L}_{m,n}^{\beta}$ defined as
$$
  \mathbf{R} = \mathbf{Y}\mathbf{L}_{1}^{-1}
$$
where $\mathbf{L}_{1}$ is any square root of $\mathbf{V} =
(\mathbf{V}_{1}+\mathbf{Y}^{*}\mathbf{Y})$ such that $\mathbf{L}_{1}\mathbf{L}_{1}^{*} =
\mathbf{V}$, $\mathbf{V}_{1} \sim \mathcal{W}_{n}^{\beta}(\nu+n-m, \mbox{\textbf{\cyr L}})$,
independent of $\mathbf{Y} \sim \mathcal{N}_{m \times n}^{\beta}(\mathbf{0},\mbox{\textbf{\cyr
L}} \otimes \mathbf{I}_{n})$, and $\mbox{\textbf{\cyr L}} \in \mathfrak{P}_{n}^{\beta}$. Then
$\mathbf{V} \sim \mathcal{W}_{n}^{\beta}(\nu+n, \mbox{\textbf{\cyr L}})$ independently of
$\mathbf{R}$. Furthermore, $\mathbf{R} \sim \mathcal{P}II_{m \times n}^{\beta}(\nu,
\boldsymbol{0}, \textbf{I}_{m},\textbf{I}_{n})$.
\end{cor}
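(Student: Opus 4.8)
The statement is the right-hand counterpart of Theorem \ref{teo1}, so the plan is to mirror that proof, now transforming on the right of $\mathbf{Y}$ rather than on the left of $\mathbf{X}$. (One could alternatively deduce it from Theorem \ref{teo1} with the roles of $m$ and $n$ interchanged together with Corollary \ref{cor2}, but the direct computation is just as short and keeps all normalising constants explicit.) First I would write down the joint density of $(\mathbf{V}_{1},\mathbf{Y})$. Since $\mathbf{V}_{1}\sim\mathcal{W}_{n}^{\beta}(\nu+n-m,\mbox{\textbf{\cyr L}})$ and $\mathbf{Y}\sim\mathcal{N}_{m\times n}^{\beta}(\mathbf{0},\mbox{\textbf{\cyr L}}\otimes\mathbf{I}_{n})$ are independent, using the same $\beta$-scaled normalisations as in the proof of Theorem \ref{teo1} their joint density is
$$
  c\,|\mathbf{V}_{1}|^{\beta(\nu-m+1)/2-1}\etr\{-\beta\mbox{\textbf{\cyr L}}^{-1}(\mathbf{V}_{1}+\mathbf{Y}^{*}\mathbf{Y})/2\},
$$
where the exponent on $|\mathbf{V}_{1}|$ collapses to $\beta(\nu-m+1)/2-1$ precisely because the degrees of freedom were taken to be $\nu+n-m$ (so that $\beta(\nu+n-m)/2-\beta(n-1)/2-1=\beta(\nu-m+1)/2-1$), and because $\mathbf{V}_{1}+\mathbf{Y}^{*}\mathbf{Y}=\mathbf{V}$ the exponential factor is simply $\etr\{-\beta\mbox{\textbf{\cyr L}}^{-1}\mathbf{V}/2\}$.

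Next I would change variables $(\mathbf{V}_{1},\mathbf{Y})\mapsto(\mathbf{V},\mathbf{R})$ through $\mathbf{V}=\mathbf{V}_{1}+\mathbf{Y}^{*}\mathbf{Y}$ and $\mathbf{Y}=\mathbf{R}\mathbf{L}_{1}$. The translation $\mathbf{V}_{1}\mapsto\mathbf{V}$ at fixed $\mathbf{Y}$ has unit Jacobian, and Lemma \ref{lemlt} with $\mathbf{A}=\mathbf{I}_{m}$, $\mathbf{B}=\mathbf{L}_{1}$, $\mathbf{C}=\mathbf{0}$ gives $(d\mathbf{Y})=|\mathbf{L}_{1}^{*}\mathbf{L}_{1}|^{\beta m/2}(d\mathbf{R})=|\mathbf{V}|^{\beta m/2}(d\mathbf{R})$, whence $(d\mathbf{V}_{1})(d\mathbf{Y})=|\mathbf{V}|^{\beta m/2}(d\mathbf{V})(d\mathbf{R})$. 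The crucial algebraic identity is
$$
  |\mathbf{V}_{1}|=|\mathbf{V}-\mathbf{Y}^{*}\mathbf{Y}|=|\mathbf{V}|\,|\mathbf{I}_{n}-\mathbf{R}^{*}\mathbf{R}|,
$$
obtained from the factorisation $\mathbf{V}_{1}=\mathbf{L}_{1}^{*}(\mathbf{I}_{n}-\mathbf{R}^{*}\mathbf{R})\mathbf{L}_{1}$; this is the analogue of $\mathbf{U}_{1}=\mathbf{L}(\mathbf{I}_{m}-\mathbf{R}\mathbf{R}^{*})\mathbf{L}^{*}$ in Theorem \ref{teo1}, and it is the step where the square root $\mathbf{L}_{1}$ of $\mathbf{V}$ must be read in the way appropriate to the one-sided definition $\mathbf{R}=\mathbf{Y}\mathbf{L}_{1}^{-1}$, so that $\mathbf{Y}^{*}\mathbf{Y}$ and $\mathbf{V}$ are sandwiched by the same factor. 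Substituting, the joint density of $(\mathbf{V},\mathbf{R})$ is proportional to
$$
  |\mathbf{V}|^{\beta(\nu+1)/2-1}\etr\{-\beta\mbox{\textbf{\cyr L}}^{-1}\mathbf{V}/2\}\;|\mathbf{I}_{n}-\mathbf{R}^{*}\mathbf{R}|^{\beta(\nu-m+1)/2-1}.
$$

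From this display all three conclusions follow. The density factors as a function of $\mathbf{V}$ times a function of $\mathbf{R}$, so $\mathbf{V}$ and $\mathbf{R}$ are independent; the $\mathbf{V}$-factor is the kernel of $\mathcal{W}_{n}^{\beta}(\nu+n,\mbox{\textbf{\cyr L}})$ (its exponent $\beta(\nu+1)/2-1$ is exactly $\beta(\nu+n)/2-\beta(n-1)/2-1$), which gives the first assertion; and integrating $\mathbf{V}$ out with the $\Gamma_{n}^{\beta}$-normalisation of that Wishart the marginal of $\mathbf{R}$ becomes
$$
  \frac{\Gamma_{n}^{\beta}[\beta(n+\nu)/2]}{\pi^{mn\beta/2}\,\Gamma_{n}^{\beta}[\beta(n+\nu-m)/2]}|\mathbf{I}_{n}-\mathbf{R}^{*}\mathbf{R}|^{\beta(\nu-m+1)/2-1},
$$
which is precisely (\ref{IT2}), i.e. $\mathbf{R}\sim\mathcal{P}II_{m\times n}^{\beta}(\nu,\boldsymbol{0},\mathbf{I}_{m},\mathbf{I}_{n})$ by Corollary \ref{cor1}. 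I expect the only genuine work to be the constant bookkeeping: one must check that $c$ multiplied by the Wishart-normalising factor picked up when integrating out $\mathbf{V}$ does collapse to the displayed constant, i.e. that the powers of $2\beta^{-1}$ and of $|\mbox{\textbf{\cyr L}}|$ cancel identically and only the ratio of $\Gamma_{n}^{\beta}$-values and the factor $\pi^{-mn\beta/2}$ survive. That, together with getting the square root right in the determinant identity, is the only part that is not purely mechanical.
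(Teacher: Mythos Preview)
Your proposal is correct and follows exactly the approach the paper takes: the paper's own proof is the single sentence ``The proof is a verbatim copy of the proof of Theorem \ref{teo1}'', and what you have written is precisely that verbatim copy, carried out on the right of $\mathbf{Y}$ instead of on the left of $\mathbf{X}$. Your flag about reading the square root $\mathbf{L}_{1}$ consistently so that $\mathbf{V}-\mathbf{Y}^{*}\mathbf{Y}$ factors as $\mathbf{L}_{1}^{*}(\mathbf{I}_{n}-\mathbf{R}^{*}\mathbf{R})\mathbf{L}_{1}$ is well placed and is the only point where any care is needed.
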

\begin{proof}
The proof is a verbatim copy of the proof of Theorem \ref{teo1}. \qed
\end{proof}

Now, assume that $\mathbf{R} \sim \mathcal{P}II_{m \times n}^{\beta}(\nu, \boldsymbol{0},
\mathbf{I}_{m},\mathbf{I}_{n})$ with $n \geq m$ and let $\mathbf{B} \in
\mathfrak{P}_{m}^{\beta}$ defined as $\mathbf{B} = \mathbf{RR}^{*}$ then, under the conditions
of Theorem \ref{teo1} and Corollary \ref{cor3}, we have
\begin{eqnarray*}
  \mathbf{B} &=& \mathbf{L}^{-1}\mathbf{XX}^{*}(\mathbf{L}^{-1})^{*} = \mathbf{L}^{-1}\mathbf{W}(\mathbf{L}^{-1})^{*} \\
   &=& \mathbf{Y}(\mathbf{V}_{1}+\mathbf{Y}^{*}\mathbf{Y})^{-1}\mathbf{Y}^{*},
\end{eqnarray*}
where $\mathbf{W}=\mathbf{XX}^{*} \sim \mathcal{W}_{m}^{\beta}(n, \mbox{\textbf{\cyr I}})$, $n
> \beta(m-1)$. Thus:
\begin{thm}\label{teo2}
The density of $\mathbf{B}$ is
\begin{equation}\label{BI1}
    \frac{1}{\mathcal{B}_{m}^{\beta}[\beta \nu/2, \beta n/2]}|\mathbf{B}|^{\beta(n-m+1)/2-1}
    |\mathbf{I}_{m}-\mathbf{B}|^{\beta(\nu-m+1)/2-1},
\end{equation}
where $\mathcal{B}_{m}^{\beta}[\cdot, \cdot]$ is given by (\ref{beta}) and $\mathbf{B}$ is said
to have a \emph{matricvariate beta type I distribution}.
\end{thm}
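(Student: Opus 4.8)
The plan is to obtain the density of $\mathbf{B} = \mathbf{R}\mathbf{R}^{*}$ by change of variables from the density of $\mathbf{R}$ given in \eqref{IT}, using Lemma \ref{lemW}. First I would note that since $n \geq m$ and $\mathbf{R} \in \mathcal{L}_{m,n}^{\beta}$, the matrix $\mathbf{B} = \mathbf{R}\mathbf{R}^{*} \in \mathfrak{P}_{m}^{\beta}$ lies in the cone of positive definite Hermitian matrices, and moreover $\mathbf{I}_{m} - \mathbf{R}\mathbf{R}^{*} \in \mathfrak{P}_{m}^{\beta}$ forces $\mathbf{0} < \mathbf{B} < \mathbf{I}_{m}$, which is precisely the support appearing in \eqref{BI1} and in the definition \eqref{beta} of $\mathcal{B}_{m}^{\beta}$.

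Next, writing $\mathbf{B} = \mathbf{R}\mathbf{R}^{*}$ and applying Lemma \ref{lemW} with the roles $\mathbf{X} \leadsto \mathbf{R}$, $\mathbf{S} \leadsto \mathbf{B}$, I would use
$$
  (d\mathbf{R}) = 2^{-m}|\mathbf{B}|^{\beta(n-m+1)/2-1}(d\mathbf{B})(\mathbf{V}_{1}d\mathbf{V}_{1}^{*}).
$$
Substituting into \eqref{IT}, the factor $|\mathbf{I}_{m} - \mathbf{R}\mathbf{R}^{*}|^{\beta(\nu-m+1)/2-1} = |\mathbf{I}_{m}-\mathbf{B}|^{\beta(\nu-m+1)/2-1}$ depends only on $\mathbf{B}$, so the $\mathbf{V}_{1}$-dependence is confined to the differential form $(\mathbf{V}_{1}d\mathbf{V}_{1}^{*})$. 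Integrating that out over the relevant Stiefel manifold contributes the volume factor $\Vol(\mathcal{V}_{m,n}^{\beta})$ from \eqref{vol}, namely $2^{m}\pi^{mn\beta/2}/\Gamma_{m}^{\beta}[n\beta/2]$. Collecting constants, the $2^{-m}$ from Lemma \ref{lemW} cancels the $2^{m}$ from the volume, and the $\pi^{mn\beta/2}$ cancels the $\pi^{-mn\beta/2}$ in \eqref{IT}, leaving
$$
  \frac{\Gamma_{m}^{\beta}[\beta(n+\nu)/2]}{\Gamma_{m}^{\beta}[\beta\nu/2]\,\Gamma_{m}^{\beta}[\beta n/2]}
  |\mathbf{B}|^{\beta(n-m+1)/2-1}|\mathbf{I}_{m}-\mathbf{B}|^{\beta(\nu-m+1)/2-1}.
$$
Finally I would recognise the leading constant as $1/\mathcal{B}_{m}^{\beta}[\beta\nu/2,\beta n/2]$ via the identity $\mathcal{B}_{m}^{\beta}[b,a] = \Gamma_{m}^{\beta}[a]\Gamma_{m}^{\beta}[b]/\Gamma_{m}^{\beta}[a+b]$ in \eqref{beta}, taking $a = \beta n/2$ and $b = \beta\nu/2$ so that $a+b = \beta(n+\nu)/2$; a small check is needed to confirm that the exponents in \eqref{BI1} match the parametrisation of $\mathcal{B}_{m}^{\beta}$, i.e. that $a - (m-1)\beta/2 - 1 = \beta(n-m+1)/2 - 1$ and $b - (m+1)\beta/2 - 1 = \beta(\nu-m+1)/2 - 1$, which both hold.

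I do not expect a serious obstacle here; the argument is a routine Jacobian-and-integrate computation. The one point requiring mild care is the bookkeeping of powers of $2$ and $\pi$ and the verification that integrating the form $(\mathbf{V}_{1}d\mathbf{V}_{1}^{*})$ genuinely yields $\Vol(\mathcal{V}_{m,n}^{\beta})$ rather than the volume of some other Stiefel manifold — this hinges on $\mathbf{V}_{1}$ in Lemma \ref{lemW} ranging over $\mathcal{V}_{m,n}^{\beta}$, which is consistent with the construction $\mathbf{B} = \mathbf{R}\mathbf{R}^{*}$ from $\mathbf{R}\in\mathcal{L}_{m,n}^{\beta}$. Alternatively, one could avoid the change of variables altogether and simply cite the known distribution of $\mathbf{Y}(\mathbf{V}_{1}+\mathbf{Y}^{*}\mathbf{Y})^{-1}\mathbf{Y}^{*}$ as matricvariate beta type I, using the representation of $\mathbf{B}$ displayed just before the theorem together with the independence of $\mathbf{W} \sim \mathcal{W}_{m}^{\beta}(n,\mbox{\textbf{\cyr I}})$ and $\mathbf{V}_{1} \sim \mathcal{W}_{n}^{\beta}(\nu+n-m,\mbox{\textbf{\cyr L}})$; but the direct computation from \eqref{IT} is cleaner and self-contained within this excerpt.
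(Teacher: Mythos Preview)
Your proposal is correct and follows exactly the route the paper takes: the paper's proof is the one-line ``follows from (\ref{IT}) by applying (\ref{vol}) and (\ref{w}),'' and you have simply unpacked that computation in full, including the cancellation of the factors $2^{\pm m}$ and $\pi^{\pm mn\beta/2}$ and the identification of the resulting ratio of multivariate gamma functions with $1/\mathcal{B}_{m}^{\beta}[\beta\nu/2,\beta n/2]$. One small caveat: in your final consistency check of the exponents against the integrand in (\ref{beta}), note that the exponent on $|\mathbf{I}_{m}-\mathbf{B}|$ there should read $b-(m-1)\beta/2-1$ (the $(m+1)$ in the displayed formula is a typo), so your verification goes through once that is corrected.
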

\begin{proof}
The proof follows from (\ref{IT}) by applying (\ref{vol}) and (\ref{w}). \qed
\end{proof}

In addition, assume that $n < m$ and let  $\widetilde{\mathbf{B}} \in \mathfrak{P}_{n}^{\beta}$
defined as $\widetilde{\mathbf{B}} = \mathbf{R}^{*}\mathbf{R}$ then, under the conditions of
Theorem \ref{teo1} and Corollary \ref{cor3} we have
\begin{eqnarray*}
  \widetilde{\mathbf{B}} &=& \mathbf{X}^{*}(\mathbf{U}_{1}+\mathbf{X}^{*}\mathbf{X})^{-1}\mathbf{X}\\
   &=& \mathbf{L}_{1}^{-1}\mathbf{Y}^{*}\mathbf{Y}(\mathbf{L}_{1}^{-1})^{*} =
   \mathbf{L}_{1}^{-1}\mathbf{W}_{1}(\mathbf{L}_{1}^{-1})^{*}
\end{eqnarray*}
where $\mathbf{W}_{1} = \mathbf{Y}^{*}\mathbf{Y} \sim \mathcal{W}_{n}^{\beta}(m,
\mbox{\textbf{\cyr I}})$, $m
> \beta(n-1)$, Thus:
\begin{thm}\label{teo3}
$\widetilde{\mathbf{B}}$ has the density
\begin{equation}\label{BI11}
    \frac{1}{\mathcal{B}_{n}^{\beta}[\beta (\nu+n-m)/2, \beta m/2]}|\widetilde{\mathbf{B}}|^{\beta(m-n+1)/2-1}
    |\mathbf{I}_{n}-\widetilde{\mathbf{B}}|^{\beta(\nu-m+1)/2-1}.
\end{equation}
Also, we say that $\widetilde{\mathbf{B}}$ has a \emph{matricvariate beta type I distribution}.
\end{thm}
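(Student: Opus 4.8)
The plan is to mirror the proof of Theorem \ref{teo2}, working from the second representation of the Pearson type II density, namely (\ref{IT2}), rather than from (\ref{IT}). The setup already supplied before the statement gives $\widetilde{\mathbf{B}} = \mathbf{R}^{*}\mathbf{R}$ with $\mathbf{R} \sim \mathcal{P}II_{m \times n}^{\beta}(\nu, \boldsymbol{0}, \mathbf{I}_{m},\mathbf{I}_{n})$ and $n < m$, so $\mathbf{R}^{*}\mathbf{R} \in \mathfrak{P}_{n}^{\beta}$ is nonsingular. First I would start from the density (\ref{IT2}) of $\mathbf{R}$, which is proportional to $|\mathbf{I}_{n} - \mathbf{R}^{*}\mathbf{R}|^{\beta(\nu-m+1)/2-1}$ over the region $\mathbf{I}_{n} - \mathbf{R}^{*}\mathbf{R} \in \mathfrak{P}_{n}^{\beta}$, with normalising constant $\Gamma_{n}^{\beta}[\beta(n+\nu)/2]/(\pi^{mn\beta/2}\Gamma_{n}^{\beta}[\beta(n+\nu-m)/2])$.

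Next I would change variables from $\mathbf{R} \in \mathcal{L}_{m,n}^{\beta}$ (viewed with $n \le m$, i.e.\ ``tall'') to $\widetilde{\mathbf{B}} = \mathbf{R}^{*}\mathbf{R} \in \mathfrak{P}_{n}^{\beta}$ together with a Stiefel-type variable. The relevant Jacobian is Lemma \ref{lemW} applied to $\mathbf{R}^{*}$ (or equivalently the transposed version): writing $\mathbf{S} = \mathbf{R}^{*}\mathbf{R}$ with $\mathbf{R}^{*} \in \mathcal{L}_{n,m}^{\beta}$, one gets $(d\mathbf{R}) = 2^{-n}|\mathbf{S}|^{\beta(m - n + 1)/2 - 1}(d\mathbf{S})(\mathbf{V}_{1}d\mathbf{V}_{1}^{*})$ where $\mathbf{V}_{1} \in \mathcal{V}_{n,m}^{\beta}$. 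Integrating out the Stiefel variable contributes the factor $\Vol(\mathcal{V}^{\beta}_{n,m}) = 2^{n}\pi^{mn\beta/2}/\Gamma^{\beta}_{n}[m\beta/2]$ from (\ref{vol}); the $2^{-n}$ and $2^{n}$ cancel. Multiplying the Jacobian density factor $|\widetilde{\mathbf{B}}|^{\beta(m-n+1)/2-1}$ by the transformed integrand $|\mathbf{I}_{n} - \widetilde{\mathbf{B}}|^{\beta(\nu-m+1)/2-1}$ gives exactly the stated functional form in (\ref{BI11}).

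It then remains to check that the collected constant equals $1/\mathcal{B}_{n}^{\beta}[\beta(\nu+n-m)/2, \beta m/2]$. Assembling the pieces, the constant is
\[
  \frac{\Gamma_{n}^{\beta}[\beta(n+\nu)/2]}{\pi^{mn\beta/2}\,\Gamma_{n}^{\beta}[\beta(n+\nu-m)/2]} \cdot \frac{\pi^{mn\beta/2}}{\Gamma^{\beta}_{n}[\beta m/2]} = \frac{\Gamma_{n}^{\beta}[\beta(n+\nu)/2]}{\Gamma_{n}^{\beta}[\beta(n+\nu-m)/2]\,\Gamma^{\beta}_{n}[\beta m/2]},
\]
and by the definition (\ref{beta}) of the multivariate beta function with $a = \beta(\nu+n-m)/2$ and $b = \beta m/2$ (so that $a+b = \beta(n+\nu)/2$), this is precisely $1/\mathcal{B}_{n}^{\beta}[\beta(\nu+n-m)/2, \beta m/2]$. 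Since $|\mathbf{I}_{n} - \mathbf{R}^{*}\mathbf{R}| = |\mathbf{I}_{n} - \widetilde{\mathbf{B}}|$ the support condition $\mathbf{I}_{n} - \widetilde{\mathbf{B}} \in \mathfrak{P}_{n}^{\beta}$, i.e.\ $\mathbf{0} < \widetilde{\mathbf{B}} < \mathbf{I}_{n}$, carries over directly, so no separate support computation is needed.

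The one point requiring care — and the likeliest source of a slip — is bookkeeping of which dimension plays the role of ``$m$'' and which plays ``$n$'' in Lemma \ref{lemW} and in (\ref{vol}), since here the roles are swapped relative to Theorem \ref{teo2}: we are reducing an $m\times n$ matrix with $n<m$ by forming the $n\times n$ Gram matrix $\mathbf{R}^{*}\mathbf{R}$, so the exponent in the Jacobian is $\beta(m-n+1)/2-1$ and the Stiefel volume is $\Vol(\mathcal{V}^{\beta}_{n,m})$, not $\Vol(\mathcal{V}^{\beta}_{m,n})$. Once the indices are tracked consistently, the constant collapses to the beta function as above and the proof is complete.
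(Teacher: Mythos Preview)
Your proof is correct and follows essentially the same route the paper indicates: the paper simply states that the proof is the same as that of Theorem \ref{teo2} (i.e.\ start from (\ref{IT2}) and apply (\ref{vol}) and (\ref{w}) with the roles of $m$ and $n$ interchanged), which is precisely what you carry out in detail, including the constant bookkeeping. The paper also notes an alternative shortcut you did not use: density (\ref{BI11}) can be read off from (\ref{BI1}) by the substitutions $m\to n$, $n\to m$, $\nu\to\nu+n-m$.
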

\begin{proof}
The proof is the same as that given in Theorem \ref{teo2}. Alternatively, observe that density
(\ref{BI11}) can be obtained from density (\ref{BI1}) making the following substitutions, see
\citet[Eq. (7), p. 455]{m:82} and \citet[p. 96]{sk:79},
\begin{equation}\label{s}
    m \rightarrow n, \quad n \rightarrow m, \quad \nu \rightarrow \nu+n-m. \qquad \mbox{\qed}
\end{equation}
\end{proof}

Densities (\ref{BI1}) and (\ref{BI11}) have been studied by several authors in the real case,
see \citet{k:70} and \citet{sk:79}, \citet{c:96}, among many others; and by \citet{j:64},
\citet{jdggj:09b} and \citet{jdggj:08} and \citet{jdggj:10a}, in noncentral, doubly noncentral,
singular and nonsingular and complex cases, among many other authors. By the analogous
construction of $\mathbf{B} = \mathbf{RR}^{*}$ (or $\widetilde{\mathbf{B}} =
\mathbf{R}^{*}\mathbf{R}$) in terms of $\mathbf{R}$ compared with the construction of the
Wishart matrix in terms of matrix multivariate normal distribution, the distribution of
$\mathbf{B} = \mathbf{RR}^{*}$ (or $\widetilde{\mathbf{B}} = \mathbf{R}^{*}\mathbf{R}$) is
sometimes termed the studentised Wishart distribution. \citet{k:84} studied densities
(\ref{BI1}) and (\ref{BI11}) for the hypercomplex case.

\section{Matrix multivariate Pearson type II distribution}\label{sec4}

\begin{thm}\label{teo4}
Let $(S^{1/2})^{2} \sim \chi^{2, \beta}(\nu)$ independent of $\mathbf{Y} \sim \mathcal{N}_{m
\times n}^{\beta}(\mathbf{0}, \mathbf{I}_{m} \otimes \mathbf{I}_{n})$, and define $S_{1} = S +
\tr \mathbf{YY}*$ and $\mathbf{R}_{1} = S_{1}^{-1/2}\mathbf{Y}$. Then $S_{1} \sim \chi^{2}(\nu
+ mn)$ independently of $\mathbf{R}_{1}$. Furthermore, the density of $\mathbf{R}_{1}$ is
\begin{equation}\label{mmp}
    \frac{\Gamma^{\beta}_{1}[\beta(\nu+mn)/2]}{\pi^{\beta mn/2}\Gamma^{\beta}_{1}[\beta \nu/2]}
    (1-\tr \mathbf{R}_{1}\mathbf{R}_{1}^{*})^{\beta \nu/2-1}, \quad 1-\tr
    \mathbf{R}_{1}\mathbf{R}_{1}^{*} > 0,
\end{equation}
which is termed the \emph{matrix multivariate Pearson type II distribution}.
\end{thm}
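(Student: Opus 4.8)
The plan is to mimic the proof of Theorem \ref{teo1}, but working with the scalar $S$ and the trace functional instead of the Wishart matrix $\mathbf{U}_1$ and the matrix product $\mathbf{X}\mathbf{X}^*$. First I would write down the joint density of $S$ and $\mathbf{Y}$. Since $(S^{1/2})^2\sim\chi^{2,\beta}(\nu)$ and $\mathbf{Y}\sim\mathcal{N}_{m\times n}^{\beta}(\mathbf{0},\mathbf{I}_m\otimes\mathbf{I}_n)$ are independent, this joint density is proportional to $S^{\beta\nu/2-1}\exp\{-\beta S/2\}\cdot\exp\{-\beta\,\tr\mathbf{YY}^*/2\}=S^{\beta\nu/2-1}\exp\{-\beta(S+\tr\mathbf{YY}^*)/2\}$, with the explicit normalising constant read off from the $\chi^{2,\beta}$ density and the $\beta mn$-dimensional Gaussian density (this is the analogue of the constant $c$ displayed in the proof of Theorem \ref{teo1}).

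Next I would carry out the change of variables $(S,\mathbf{Y})\mapsto(S_1,\mathbf{R}_1)$ given by $S_1=S+\tr\mathbf{YY}^*$, $\mathbf{Y}=S_1^{1/2}\mathbf{R}_1$, equivalently $S=S_1(1-\tr\mathbf{R}_1\mathbf{R}_1^*)$ and $\mathbf{R}_1=S_1^{-1/2}\mathbf{Y}$. The Jacobian of $\mathbf{Y}=S_1^{1/2}\mathbf{R}_1$ at fixed $S_1$ is $(S_1^{1/2})^{\beta mn}=S_1^{\beta mn/2}$ by Lemma \ref{lemlt} applied with scalar $\mathbf{A}=S_1^{1/2}\mathbf{I}_m$, $\mathbf{B}=\mathbf{I}_n$ (since $|\mathbf{A}^*\mathbf{A}|^{\beta n/2}=S_1^{\beta mn/2}$); the extra Jacobian factor from the $S\to S_1$ substitution contributes a further power of $S_1$, and collecting everything gives $(dS)(d\mathbf{Y})=S_1^{\beta mn/2}(dS_1)(d\mathbf{R}_1)$, parallel to the identity $(d\mathbf{U}_1)(d\mathbf{X})=|\mathbf{U}|^{\beta n/2}(d\mathbf{U})(d\mathbf{R})$ in Theorem \ref{teo1}. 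Substituting $S=S_1(1-\tr\mathbf{R}_1\mathbf{R}_1^*)$ into $S^{\beta\nu/2-1}$ and into the exponent $\exp\{-\beta S_1/2\}$, the joint density of $(S_1,\mathbf{R}_1)$ becomes proportional to
$$
  (1-\tr\mathbf{R}_1\mathbf{R}_1^*)^{\beta\nu/2-1}\,S_1^{\beta(\nu+mn)/2-1}\,\exp\{-\beta S_1/2\},
$$
on the region $1-\tr\mathbf{R}_1\mathbf{R}_1^*>0$, $S_1>0$. This factorises, showing $S_1$ is independent of $\mathbf{R}_1$ with $S_1\sim\chi^{2,\beta}(\nu+mn)$ (hence $S_1\sim\chi^2(\nu+mn)$ in the usual scaling), and the density of $\mathbf{R}_1$ is proportional to $(1-\tr\mathbf{R}_1\mathbf{R}_1^*)^{\beta\nu/2-1}$.

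Finally I would pin down the constant. One can either track the explicit constant $c$ through the change of variables, or simply integrate $(1-\tr\mathbf{R}_1\mathbf{R}_1^*)^{\beta\nu/2-1}$ over the ball $\{\tr\mathbf{R}_1\mathbf{R}_1^*<1\}$ in $\mathfrak{F}^{m\times n}\cong\Re^{\beta mn}$ using polar coordinates together with the surface area formula \eqref{vol} specialised to $\mathcal{V}^{\beta}_{1,mn}$ (the unit sphere in $\Re^{\beta mn}$) and a Beta integral, obtaining $\pi^{\beta mn/2}\Gamma[\beta\nu/2]/\Gamma[\beta(\nu+mn)/2]$; writing $\Gamma^{\beta}_1[a]=\Gamma[a]$ gives the stated normaliser. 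The only mildly delicate point is bookkeeping of the powers of $S_1$ and of $\beta$ in the two Jacobian contributions and in the $\chi^{2,\beta}$ normalisation, so that the exponent of $S_1$ comes out as exactly $\beta(\nu+mn)/2-1$ and the leftover constant is precisely a $\chi^{2,\beta}(\nu+mn)$ normalising constant; everything else is a direct transcription of the matrix argument in Theorem \ref{teo1} to the rank-one (scalar) setting.
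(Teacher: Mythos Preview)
Your proposal is correct and follows exactly the route the paper takes: the paper's proof simply writes down the joint density $\propto s^{\beta\nu/2-1}\etr\{-\beta(s+\tr\mathbf{YY}^{*})/2\}$ and then says ``the desired results are obtained analogously to the proof of Theorem~\ref{teo1}'', which is precisely the change of variables $(S,\mathbf{Y})\mapsto(S_{1},\mathbf{R}_{1})$ and factorisation you spell out. One small wording slip: the $S\to S_{1}$ step (at fixed $\mathbf{Y}$) has Jacobian $1$, not ``a further power of $S_{1}$'', so the entire $S_{1}^{\beta mn/2}$ comes from the $\mathbf{Y}\to\mathbf{R}_{1}$ step---but your stated final identity $(dS)(d\mathbf{Y})=S_{1}^{\beta mn/2}(dS_{1})(d\mathbf{R}_{1})$ is correct.
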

\begin{proof}
The joint density of $S$ and $\mathbf{Y}$ is
$$
  \propto s^{\beta \nu/2 -1} \etr\{-\beta (s + \tr \mathbf{YY}^{*})/2\},
$$
and the desired results are obtained analogously to the proof of Theorem \ref{teo1}. \qed
\end{proof}
\begin{cor}
Let $\mathbf{Q}_{1} = (\mathbf{M}^{*})^{-1}\mathbf{R}_{1}\mathbf{N}^{-1} + \boldsymbol{\mu}$,
$\mathbf{R}_{1}$ as in Theorem \ref{teo3}, $\mathbf{M}$ and $\mathbf{N}$ are any square root of
the constant matrices \textbf{\cyr B}$ = \mathbf{M}\mathbf{M}^{*}\in \mathfrak{P}_{m}^{\beta}$
and \textbf{\cyr D}$= \mathbf{N}\mathbf{N}^{*}\in \mathfrak{P}_{n}^{\beta}$, respectively, and
$\boldsymbol{\mu} \in \mathcal{L}_{m,n}^{\beta}$ is constant. Then,
$$
    \frac{\Gamma^{\beta}_{1}[\beta(\nu+mn)/2]}{\pi^{\beta mn/2}\Gamma^{\beta}_{1}[\beta \nu/2]}
    |\mbox{\textbf{\cyr B}}|^{\beta n/2} |\mbox{\textbf{\cyr D}}|^{\beta
    m/2}\left[1-\tr \mbox{\textbf{\cyr B}}(\mathbf{Q}_{1} - \boldsymbol{\mu})\mbox{\textbf{\cyr D}}
    (\mathbf{Q}_{1} - \boldsymbol{\mu})^{*}\right]^{\beta \nu/2-1},
$$
\par\noindent\hfill\mbox{$1-\tr \mbox{\textbf{\cyr B}}(\mathbf{Q}_{1} - \boldsymbol{\mu})\mbox{\textbf{\cyr D}}
    (\mathbf{Q}_{1} - \boldsymbol{\mu})^{*} > 0$.}\par\noindent %
Hence, we write
$$
  \mathbf{Q}_{1} \sim \mathcal{MP}II_{m \times n}^{\beta}(\nu,
  \boldsymbol{\mu}, \mbox{\textbf{\cyr B}}, \mbox{\textbf{\cyr D}}),
$$
and, in particular $\mathbf{R}_{1} \sim \mathcal{MP}II_{m \times n}^{\beta}(\nu,
\boldsymbol{0}, \mathbf{I}_{m}, \mathbf{I}_{n})$.
\end{cor}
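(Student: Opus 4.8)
The plan is to follow the proof of Corollary \ref{cor1} line for line, with the Hermitian--determinant computations there replaced by the scalar--trace computations appropriate to the matrix multivariate setting. Since $\mbox{\textbf{\cyr B}} \in \mathfrak{P}_{m}^{\beta}$ and $\mbox{\textbf{\cyr D}} \in \mathfrak{P}_{n}^{\beta}$, the square roots $\mathbf{M}$, $\mathbf{N}$ are invertible, so the map $\mathbf{R}_{1} \mapsto \mathbf{Q}_{1} = (\mathbf{M}^{*})^{-1}\mathbf{R}_{1}\mathbf{N}^{-1} + \boldsymbol{\mu}$ is a bijection of $\mathcal{L}_{m,n}^{\beta}$ with inverse $\mathbf{R}_{1} = \mathbf{M}^{*}(\mathbf{Q}_{1}-\boldsymbol{\mu})\mathbf{N} = \mathbf{M}^{*}\mathbf{Q}_{1}\mathbf{N} - \mathbf{M}^{*}\boldsymbol{\mu}\mathbf{N}$.

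First I would compute the Jacobian by applying Lemma \ref{lemlt} to this affine map, taking $\mathbf{A} = \mathbf{M}^{*}$, $\mathbf{B} = \mathbf{N}$ and additive constant $\mathbf{C} = -\mathbf{M}^{*}\boldsymbol{\mu}\mathbf{N}$, which gives
$$
  (d\mathbf{R}_{1}) = |\mathbf{M}\mathbf{M}^{*}|^{\beta n/2}|\mathbf{N}^{*}\mathbf{N}|^{\beta m/2}(d\mathbf{Q}_{1})
    = |\mbox{\textbf{\cyr B}}|^{\beta n/2}|\mbox{\textbf{\cyr D}}|^{\beta m/2}(d\mathbf{Q}_{1}),
$$
using $|\mathbf{N}^{*}\mathbf{N}| = |\mathbf{N}\mathbf{N}^{*}| = |\mbox{\textbf{\cyr D}}|$. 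Next I would rewrite the argument of density (\ref{mmp}): substituting $\mathbf{R}_{1} = \mathbf{M}^{*}(\mathbf{Q}_{1}-\boldsymbol{\mu})\mathbf{N}$ and invoking cyclic invariance of the trace,
$$
  \tr \mathbf{R}_{1}\mathbf{R}_{1}^{*}
    = \tr\bigl[\mathbf{M}^{*}(\mathbf{Q}_{1}-\boldsymbol{\mu})\mathbf{N}\mathbf{N}^{*}(\mathbf{Q}_{1}-\boldsymbol{\mu})^{*}\mathbf{M}\bigr]
    = \tr\bigl[\mbox{\textbf{\cyr B}}(\mathbf{Q}_{1}-\boldsymbol{\mu})\mbox{\textbf{\cyr D}}(\mathbf{Q}_{1}-\boldsymbol{\mu})^{*}\bigr].
$$
Feeding the Jacobian factor $|\mbox{\textbf{\cyr B}}|^{\beta n/2}|\mbox{\textbf{\cyr D}}|^{\beta m/2}$ and this identity into (\ref{mmp}) yields the asserted density of $\mathbf{Q}_{1}$, while the support condition $1 - \tr\mathbf{R}_{1}\mathbf{R}_{1}^{*} > 0$ goes over into $1 - \tr\mbox{\textbf{\cyr B}}(\mathbf{Q}_{1}-\boldsymbol{\mu})\mbox{\textbf{\cyr D}}(\mathbf{Q}_{1}-\boldsymbol{\mu})^{*} > 0$. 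The special case $\mathbf{M} = \mathbf{I}_{m}$, $\mathbf{N} = \mathbf{I}_{n}$, $\boldsymbol{\mu} = \mathbf{0}$ returns $\mathbf{R}_{1}$ itself, which accounts for the notation $\mathbf{R}_{1} \sim \mathcal{MP}II_{m \times n}^{\beta}(\nu, \boldsymbol{0}, \mathbf{I}_{m}, \mathbf{I}_{n})$.

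The only step calling for care is the trace rearrangement in the octonion case $\beta = 8$: because $\mathfrak{O}$ is non-associative, neither cyclic permutation under $\tr$ nor the collapse of $\mathbf{M}^{*}(\cdots)\mathbf{M}$ to $\mathbf{M}\mathbf{M}^{*}(\cdots)$ is formally automatic for arbitrary octonionic matrices; one justifies it exactly as elsewhere in the paper, by noting that the quantity in question is a real scalar and that the entries involved generate an associative subalgebra under the rank and positive-definiteness constraints built into $\mathcal{L}_{m,n}^{\beta}$ and $\mathfrak{P}^{\beta}$. Granting this, the argument is just the change of variables already carried out in Theorem \ref{teo1} and Corollary \ref{cor1}, so no new ingredients are required.
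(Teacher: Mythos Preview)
Your proposal is correct and follows exactly the paper's own approach: the paper's proof consists solely of invoking Lemma~\ref{lemlt} to obtain $(d\mathbf{R}_{1}) = |\mbox{\textbf{\cyr B}}|^{\beta n/2}|\mbox{\textbf{\cyr D}}|^{\beta m/2}(d\mathbf{Q}_{1})$, leaving the trace rewriting implicit. Your write-up simply fills in the details the paper omits (the explicit trace identity and the octonion caveat), so there is nothing to add or correct.
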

\begin{proof}
The proof follows observing that, by (\ref{lt})
$$
  (d\mathbf{R}_{1}) = |\mathbf{M}\mathbf{M}^{*}|^{\beta n/2} |\mathbf{N}\mathbf{N}^{*}|^{\beta
    m/2}(d\mathbf{Q}_{1}) = |\mbox{\textbf{\cyr B}}|^{\beta n/2} |\mbox{\textbf{\cyr D}}|^{\beta
    m/2}(d\mathbf{Q}_{1}). \qquad \mbox{\qed}
$$
\end{proof}
Now, assuming that $\mathbf{R}_{1} \sim \mathcal{P}II_{m \times n}^{\beta}(\nu, \boldsymbol{0},
\mathbf{I}_{m},\mathbf{I}_{n}),$ with $n \geq m$ and defining $\mathbf{B}_{1} =
\mathbf{R}_{1}\mathbf{R}_{1}^{*} \in \mathfrak{P}_{m}^{\beta}$, then, under the conditions of
Theorem \ref{teo4} we have that
$$
  \mathbf{B}_{1} = S^{-1}\mathbf{YY}^{*}
$$
where $\mathbf{W}=\mathbf{YY}^{*} \sim \mathcal{W}_{m}^{\beta}(n, \mathbf{I}_{m})$, $n
> \beta(m-1)$. Furthermore:
\begin{thm}\label{teo5}
The density of $\mathbf{B}_{1}$ is
\begin{equation}\label{BI2}
    \frac{\Gamma^{\beta}_{1}[\beta(\nu+mn)/2]}{\Gamma^{\beta}_{1}[\beta \nu/2] \Gamma_{m}^{\beta}[\beta n/2]}
    |\mathbf{B}_{1}|^{\beta(n-m+1)/2-1}(1-\tr \mathbf{B}_{1})^{\beta \nu/2-1}, \quad \mathbf{0} < \mathbf{B}_{1} < \mathbf{I}_{m},
\end{equation}$\mathbf{B}_{1}$ is said to have a \emph{matrix multivariate beta type I distribution}.
\end{thm}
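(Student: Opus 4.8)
The plan is to push the density (\ref{mmp}) of $\mathbf{R}_{1}$ forward to the density of $\mathbf{B}_{1}=\mathbf{R}_{1}\mathbf{R}_{1}^{*}$ exactly as in Theorem \ref{teo2}: change variables by the ``square root plus Stiefel'' factorisation of Lemma \ref{lemW}, and then integrate out the Stiefel factor by means of the volume formula (\ref{vol}). The decisive simplification is that, apart from its normalising constant, $\mathbf{R}_{1}$ enters (\ref{mmp}) only through $\tr\mathbf{R}_{1}\mathbf{R}_{1}^{*}=\tr\mathbf{B}_{1}$, so that after the change of variables the integrand already depends on $\mathbf{B}_{1}$ alone and no nontrivial integration in $\mathbf{B}_{1}$ is required.

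Concretely, first I would apply Lemma \ref{lemW} with $\mathbf{X}=\mathbf{R}_{1}$ and $\mathbf{S}=\mathbf{B}_{1}\in\mathfrak{P}_{m}^{\beta}$, which is legitimate because $n\geq m$ forces $\mathbf{R}_{1}$ to have full row rank and distinct positive singular values almost surely; this gives
$$(d\mathbf{R}_{1})=2^{-m}\,|\mathbf{B}_{1}|^{\beta(n-m+1)/2-1}\,(d\mathbf{B}_{1})(\mathbf{V}_{1}d\mathbf{V}_{1}^{*}).$$
Substituting into (\ref{mmp}) and replacing $\tr\mathbf{R}_{1}\mathbf{R}_{1}^{*}$ by $\tr\mathbf{B}_{1}$, the density of $\mathbf{B}_{1}$ is
$$\frac{\Gamma^{\beta}_{1}[\beta(\nu+mn)/2]}{\pi^{\beta mn/2}\Gamma^{\beta}_{1}[\beta\nu/2]}\,2^{-m}\,|\mathbf{B}_{1}|^{\beta(n-m+1)/2-1}(1-\tr\mathbf{B}_{1})^{\beta\nu/2-1}\int_{\mathbf{V}_{1}\in\mathcal{V}_{m,n}^{\beta}}(\mathbf{V}_{1}d\mathbf{V}_{1}^{*}),$$
and by (\ref{vol}) the remaining integral equals $2^{m}\pi^{mn\beta/2}/\Gamma_{m}^{\beta}[n\beta/2]$. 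The factors $2^{-m}$ and $2^{m}$ cancel and the two powers of $\pi$ cancel, leaving precisely (\ref{BI2}), using $\Gamma_{m}^{\beta}[\beta n/2]=\Gamma_{m}^{\beta}[n\beta/2]$. On the support side, $1-\tr\mathbf{R}_{1}\mathbf{R}_{1}^{*}>0$ becomes $0<\tr\mathbf{B}_{1}<1$ with $\mathbf{B}_{1}\in\mathfrak{P}_{m}^{\beta}$; since then every eigenvalue of $\mathbf{B}_{1}$ is strictly less than $1$, in particular $\mathbf{0}<\mathbf{B}_{1}<\mathbf{I}_{m}$.

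Because the three ingredients (\ref{mmp}), (\ref{vol}) and (\ref{w}) do all the work, there is no genuine obstacle here; the one point to watch is the bookkeeping of constants. A useful internal check is that the constant obtained must be the reciprocal of the Dirichlet-type integral $\int_{\mathbf{0}<\mathbf{B}_{1},\,\tr\mathbf{B}_{1}<1}|\mathbf{B}_{1}|^{\beta(n-m+1)/2-1}(1-\tr\mathbf{B}_{1})^{\beta\nu/2-1}(d\mathbf{B}_{1})$. This holds automatically since we started from a bona fide probability density, but it can also be verified independently by evaluating that integral directly, or by deriving (\ref{BI2}) from the representation $\mathbf{B}_{1}=S^{-1}\mathbf{W}$ with $\mathbf{W}=\mathbf{YY}^{*}\sim\mathcal{W}_{m}^{\beta}(n,\mathbf{I}_{m})$ independent of $S\sim\chi^{2,\beta}(\nu)$, along the lines of the standard beta type I construction; the agreement of the two routes serves as confirmation.
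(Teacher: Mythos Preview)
Your argument is correct and is exactly the paper's route: push (\ref{mmp}) forward via Lemma~\ref{lemW} (that is, (\ref{w})) and then integrate out the Stiefel factor using (\ref{vol}), after which the constants collapse to (\ref{BI2}). The paper's own proof is the one-line remark ``follows from (\ref{mmp}) by applying (\ref{vol}) and (\ref{w})'', so your write-up simply makes those two steps explicit.
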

\begin{proof}
The proof follows from (\ref{mmp}) by applying (\ref{vol}) and (\ref{w}). \qed
\end{proof}
Similarly, if $n<m$ and $\widetilde{\mathbf{B}}_{1} = \mathbf{R}_{1}^{*}\mathbf{R}_{1} \in
\mathfrak{P}_{n}^{\beta}$, thus:
\begin{thm}\label{teo6}
$\widetilde{\mathbf{B}}_{1}$ has the density
\begin{equation}\label{BI21}
    \frac{\Gamma^{\beta}_{1}[\beta(\nu+mn)/2]}{\Gamma^{\beta}_{1}[\beta \nu/2] \Gamma_{n}^{\beta}[\beta m/2]}
    |\widetilde{\mathbf{B}}_{1}|^{\beta(m-n+1)/2-1}(1-\tr \widetilde{\mathbf{B}}_{1})^{\beta \nu/2-1},
    \quad \mathbf{0} < \widetilde{\mathbf{B}}_{1} < \mathbf{I}_{n}.
\end{equation}
Thus, $\widetilde{\mathbf{B}}_{1}$ is said to have a \emph{matrix multivariate distribution
type I distribution}.
\end{thm}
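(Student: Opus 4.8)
The plan is to repeat the argument of Theorem \ref{teo5}, exploiting the fact that the construction $\mathbf{R}_{1} = S_{1}^{-1/2}\mathbf{Y}$ is symmetric under interchange of the two matrix dimensions. First I would observe that $\tr \mathbf{R}_{1}\mathbf{R}_{1}^{*} = \tr \mathbf{R}_{1}^{*}\mathbf{R}_{1}$, so that the density (\ref{mmp}) of $\mathbf{R}_{1}$ may equally be written with $\mathbf{R}_{1}^{*}\mathbf{R}_{1}$ in place of $\mathbf{R}_{1}\mathbf{R}_{1}^{*}$; equivalently, from $S_{1} = S + \tr \mathbf{Y}^{*}\mathbf{Y}$ one gets $\widetilde{\mathbf{B}}_{1} = \mathbf{R}_{1}^{*}\mathbf{R}_{1} = S^{-1}\mathbf{W}_{1}$ with $\mathbf{W}_{1} = \mathbf{Y}^{*}\mathbf{Y} \sim \mathcal{W}_{n}^{\beta}(m, \mathbf{I}_{n})$, which is nonsingular precisely when $m > \beta(n-1)$ — a condition supplied by the standing hypothesis $n < m$.

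Next I would regard $\mathbf{R}_{1}^{*} \in \mathcal{L}_{n,m}^{\beta}$ and apply Lemma \ref{lemW} with $\mathbf{R}_{1}^{*}$ in the role of $\mathbf{X}$, which gives
$$
  (d\mathbf{R}_{1}^{*}) = 2^{-n}\,|\widetilde{\mathbf{B}}_{1}|^{\beta(m-n+1)/2-1}\,(d\widetilde{\mathbf{B}}_{1})\,(\mathbf{V}_{1}d\mathbf{V}_{1}^{*}).
$$
Substituting this into (\ref{mmp}), integrating $\mathbf{V}_{1}$ over the Stiefel manifold $\mathcal{V}_{n,m}^{\beta}$ (whose volume is $2^{n}\pi^{nm\beta/2}/\Gamma_{n}^{\beta}[m\beta/2]$ by (\ref{vol})) so that the factor $2^{-n}$ cancels, one is left with a density proportional to $|\widetilde{\mathbf{B}}_{1}|^{\beta(m-n+1)/2-1}(1-\tr \widetilde{\mathbf{B}}_{1})^{\beta\nu/2-1}$ on $\mathbf{0} < \widetilde{\mathbf{B}}_{1} < \mathbf{I}_{n}$; collecting the constants from (\ref{mmp}) and from $\Vol(\mathcal{V}_{n,m}^{\beta})$ and rewriting them through $\Gamma_{1}^{\beta}$ and $\Gamma_{n}^{\beta}$ produces exactly the normalising constant in (\ref{BI21}).

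Alternatively — and this is the shortest route — one applies to density (\ref{BI2}) the substitution $m \rightarrow n$, $n \rightarrow m$; here, in contrast with (\ref{s}), no change in $\nu$ is needed, since the chi-square degrees of freedom $\nu$ depends on neither dimension, and because $mn = nm$ the argument $\beta(\nu+mn)/2$ is unchanged, $\Gamma_{m}^{\beta}[\beta n/2]$ becomes $\Gamma_{n}^{\beta}[\beta m/2]$, and the exponents transform as required, reproducing (\ref{BI21}) verbatim. I do not anticipate any genuine obstacle: the only point deserving a line of justification is the reversal of the roles of $m$ and $n$ in the Wishart degrees of freedom, together with the attendant non-singularity requirement $m > \beta(n-1)$, which is precisely what the assumption $n < m$ guarantees.
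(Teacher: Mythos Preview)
Your proposal is correct, and your ``shortest route'' is exactly the paper's proof: the authors simply obtain (\ref{BI21}) from (\ref{BI2}) by the substitution $m \rightarrow n$, $n \rightarrow m$, noting (as you do) that $\nu$ needs no adjustment here, in contrast with (\ref{s}). Your first, more explicit derivation via Lemma \ref{lemW} applied to $\mathbf{R}_{1}^{*}$ is a valid expansion of what underlies that substitution, but it is not something the paper spells out.
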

\begin{proof}
Density (\ref{BI21}) can be obtained from density (\ref{BI2}) by making the following
substitutions,
\begin{equation}\label{s2}
    m \rightarrow n, \quad n \rightarrow m. \qquad \qquad \mbox{\qed}
\end{equation}
\end{proof}

\section{Singular value densities}\label{sec5}
In this section, we derive the joint density of the singular values of matrices $\mathbf{R}$,
$\widetilde{\mathbf{R}}$, $\mathbf{R}_{1}$ and $\widetilde{\mathbf{R}}_{1}$. In addition, the
joint densities of the eigenvalues of $\mathbf{B}$, $\widetilde{\mathbf{B}}$, $\mathbf{B}_{1}$
and $\widetilde{\mathbf{B}}_{1}$ are obtained.

\begin{thm}\label{teosv}
Let $\delta_{1}, \dots, \delta_{m}$ be the singular values of $\mathbf{R} \sim \mathcal{P}II_{m
\times n}^{\beta}(\nu,\mathbf{0}, \mathbf{I}_{m}, \mathbf{I}_{n})$, $1 > \delta_{1}> \cdots >
\delta_{m} > 0$. Then its joint density is
\begin{equation}\label{svPII}
    \frac{2^{m} \ \pi^{\beta m^{2}+\tau}}{\Gamma_{m}^{\beta}[\beta m/2] \mathcal{B}_{m}^{\beta}[\beta \nu/2, \beta n/2]}
    \prod_{i=1}^{m} \delta_{i}^{\beta(n-m+1)-1}(1-\delta_{i}^{2})^{\beta(\nu-m+1)/2-1}
    \prod_{i<j}^{m}(\delta_{i}^{2} - \delta_{j}^{2})^{\beta}
\end{equation}
where $\tau$ is defined in Lemma \ref{lemsvd}.
\end{thm}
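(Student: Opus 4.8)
The plan is to start from the matricvariate Pearson type II density (\ref{IT}) for $\mathbf{R}$ and apply the singular value decomposition Jacobian of Lemma \ref{lemsvd}. Writing $\mathbf{R} = \mathbf{V}^{*}\mathbf{D}\mathbf{W}_{1}$ with $\mathbf{D} = \diag(\delta_{1},\dots,\delta_{m})$, I observe that $\mathbf{R}\mathbf{R}^{*} = \mathbf{V}^{*}\mathbf{D}^{2}\mathbf{V}$, so $|\mathbf{I}_{m} - \mathbf{R}\mathbf{R}^{*}| = \prod_{i=1}^{m}(1-\delta_{i}^{2})$, and the constraint $\mathbf{I}_{m} - \mathbf{R}\mathbf{R}^{*} \in \mathfrak{P}_{m}^{\beta}$ becomes exactly $1 > \delta_{1} > \cdots > \delta_{m} > 0$. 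Substituting (\ref{svd}) into (\ref{IT}) and collecting terms, the integrand factorises as
\begin{equation*}
    \frac{\Gamma_{m}^{\beta}[\beta(n+\nu)/2]}{\pi^{mn\beta/2}\Gamma_{m}^{\beta}[\beta\nu/2]}
    \cdot 2^{-m}\pi^{\tau}
    \prod_{i=1}^{m}\delta_{i}^{\beta(n-m+1)-1}(1-\delta_{i}^{2})^{\beta(\nu-m+1)/2-1}
    \prod_{i<j}^{m}(\delta_{i}^{2}-\delta_{j}^{2})^{\beta}
\end{equation*}
times the differential forms $(d\mathbf{D})(\mathbf{V}d\mathbf{V}^{*})(\mathbf{W}_{1}d\mathbf{W}_{1}^{*})$.

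Next I would integrate out the Stiefel/unitary parts. The variables $\mathbf{V} \in \mathfrak{U}^{\beta}(m)$ and $\mathbf{W}_{1} \in \mathcal{V}_{m,n}^{\beta}$ do not appear in the integrand, so integrating them contributes the volume factors $\Vol(\mathfrak{U}^{\beta}(m)) \cdot \Vol(\mathcal{V}_{m,n}^{\beta})$, both given by (\ref{vol}). With $\mathfrak{U}^{\beta}(m) = \mathcal{V}_{m,m}^{\beta}$ this gives
\begin{equation*}
    \Vol(\mathcal{V}_{m,m}^{\beta}) \Vol(\mathcal{V}_{m,n}^{\beta})
    = \frac{2^{m}\pi^{m^{2}\beta/2}}{\Gamma_{m}^{\beta}[m\beta/2]}
    \cdot \frac{2^{m}\pi^{mn\beta/2}}{\Gamma_{m}^{\beta}[n\beta/2]}.
\end{equation*}

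Finally I would multiply the pieces together and check that the constant simplifies to the claimed form. The $\pi^{mn\beta/2}$ from the second volume cancels the $\pi^{-mn\beta/2}$ from the normalising constant of (\ref{IT}); one factor $2^{-m}$ from (\ref{svd}) combines with the $2^{m}2^{m}$ from the two volumes to leave $2^{m}$; the $\pi^{\tau}$ from (\ref{svd}) combines with $\pi^{m^{2}\beta/2}$ to give $\pi^{\beta m^{2}+\tau}$ after noting the bookkeeping $m^{2}\beta/2$ versus $\beta m^{2}$ — here I must be careful, since the stated exponent is $\beta m^{2}+\tau$, so I should double-check whether an additional factor is absorbed or whether $\tau$ in the statement already accounts for the discrepancy (indeed $\tau$ was defined precisely to make the SVD Jacobian come out right, and the combination of $\Gamma_{m}^{\beta}$'s should land on $\Gamma_{m}^{\beta}[\beta m/2]\,\mathcal{B}_{m}^{\beta}[\beta\nu/2,\beta n/2]$ via $\mathcal{B}_{m}^{\beta}[\beta\nu/2,\beta n/2] = \Gamma_{m}^{\beta}[\beta\nu/2]\Gamma_{m}^{\beta}[\beta n/2]/\Gamma_{m}^{\beta}[\beta(\nu+n)/2]$ from (\ref{beta})). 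The main obstacle is purely this constant bookkeeping: reconciling the exponents of $\pi$ and powers of $2$ coming from the two volume formulas, the SVD Jacobian's $\pi^{\tau}$ and $2^{-m}$, and the normalising constant of the Pearson type II density, and verifying that the Gamma functions telescope exactly into the stated beta function. No genuinely hard analytic step arises; it is an exercise in careful substitution.
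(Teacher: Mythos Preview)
Your approach is exactly the paper's: start from the Pearson type II density (\ref{IT}), apply the SVD Jacobian (\ref{svd}), and integrate out $\mathbf{V}$ and $\mathbf{W}_{1}$ via the Stiefel volumes (\ref{vol}); the paper's proof consists of that single sentence. Your hesitation about the $\pi$ exponent is understandable---your bookkeeping actually yields $\pi^{\beta m^{2}/2+\tau}$ rather than the stated $\pi^{\beta m^{2}+\tau}$---but that is a question about the printed constant, not about the method, which is correct and identical to the paper's.
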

\begin{proof}
The proof follows immediately from (\ref{IT}), first using (\ref{svd}) and then applying
(\ref{vol}). \qed
\end{proof}
The joint density of the singular values of $\widetilde{\mathbf{R}}$ is obtained from
(\ref{svPII}) after making the substitutions (\ref{s}).

\begin{thm}
Suppose that $\mathbf{R}_{1} \sim \mathcal{MP}II_{m \times n}^{\beta}(\nu,\mathbf{0},
\mathbf{I}_{m}, \mathbf{I}_{n})$ and let $\alpha_{1}, \dots, \alpha_{m}$, $1 > \alpha_{1}>
\cdots >, \alpha_{m} > 0$, $0 < \sum_{i=1}^{m}\alpha_{i} < 1$, its singular values. Then its
joint density is
\begin{equation}\label{svMPII}
  \frac{2^{m} \pi^{\beta m^{2}/2 + \tau}\Gamma_{1}^{\beta}[\beta(\nu +
  mn)/2]}{\Gamma_{1}^{\beta}[\beta \nu/2]
  \Gamma_{m}^{\beta}[\beta m/2]\Gamma_{m}^{\beta}[\beta n/2]}
  \left(1-\sum_{i=1}^{m}\alpha_{i}^{2}\right)^{\beta \nu/2-1}\prod_{i=1}^{m} \alpha_{i}^{\beta(n-m+1)-1}
  \prod_{i<j}^{m}(\alpha_{i}^{2} - \alpha_{j}^{2})^{\beta}
\end{equation}
\end{thm}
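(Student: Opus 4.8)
The plan is to mimic exactly the argument used in Theorem~\ref{teosv}, but starting from the matrix multivariate Pearson type II density (\ref{mmp}) rather than the matricvariate one (\ref{IT}). First I would take $\mathbf{R}_1 \sim \mathcal{MP}II_{m\times n}^{\beta}(\nu,\mathbf{0},\mathbf{I}_m,\mathbf{I}_n)$ with density
$$
  \frac{\Gamma^{\beta}_{1}[\beta(\nu+mn)/2]}{\pi^{\beta mn/2}\Gamma^{\beta}_{1}[\beta \nu/2]}
    (1-\tr \mathbf{R}_{1}\mathbf{R}_{1}^{*})^{\beta \nu/2-1},
$$
and apply the singular value decomposition $\mathbf{R}_1 = \mathbf{V}^{*}\mathbf{D}\mathbf{W}_1$ of Lemma~\ref{lemsvd}, where $\mathbf{D}=\diag(\alpha_1,\dots,\alpha_m)$. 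The Jacobian (\ref{svd}) contributes the factor $2^{-m}\pi^{\tau}\prod_i \alpha_i^{\beta(n-m+1)-1}\prod_{i<j}(\alpha_i^2-\alpha_j^2)^{\beta}$ together with the invariant measures $(\mathbf{V}d\mathbf{V}^{*})$ and $(\mathbf{W}_1 d\mathbf{W}_1^{*})$.

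The key simplification is that $\tr\mathbf{R}_1\mathbf{R}_1^{*} = \tr(\mathbf{V}^{*}\mathbf{D}\mathbf{W}_1\mathbf{W}_1^{*}\mathbf{D}^{*}\mathbf{V}) = \tr\mathbf{D}^2 = \sum_{i=1}^{m}\alpha_i^2$ by orthogonality of $\mathbf{W}_1$ (i.e.\ $\mathbf{W}_1\mathbf{W}_1^{*}=\mathbf{I}_m$) and the cyclic property of the trace, so the factor $(1-\tr\mathbf{R}_1\mathbf{R}_1^{*})^{\beta\nu/2-1}$ becomes $(1-\sum_i\alpha_i^2)^{\beta\nu/2-1}$, which depends only on the $\alpha_i$. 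Hence the integrand factors completely, and I would integrate out $\mathbf{V}$ over $\mathfrak{U}^{\beta}(m)$ and $\mathbf{W}_1$ over $\mathcal{V}_{m,n}^{\beta}$ using (\ref{vol}): the $\mathbf{W}_1$ integral yields $\Vol(\mathcal{V}_{m,n}^{\beta}) = 2^{m}\pi^{mn\beta/2}/\Gamma^{\beta}_{m}[n\beta/2]$, and the $\mathbf{V}$ integral yields $\Vol(\mathcal{V}_{m,m}^{\beta}) = 2^{m}\pi^{m^2\beta/2}/\Gamma^{\beta}_{m}[m\beta/2]$.

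Collecting constants is then the only remaining task: the prefactor $\Gamma^{\beta}_{1}[\beta(\nu+mn)/2]\big/\big(\pi^{\beta mn/2}\Gamma^{\beta}_{1}[\beta\nu/2]\big)$ times $2^{-m}\pi^{\tau}$ times the two volumes should collapse to $2^{m}\pi^{\beta m^2/2+\tau}\Gamma_{1}^{\beta}[\beta(\nu+mn)/2]\big/\big(\Gamma_{1}^{\beta}[\beta\nu/2]\Gamma_{m}^{\beta}[\beta m/2]\Gamma_{m}^{\beta}[\beta n/2]\big)$; indeed $2^{-m}\cdot 2^m\cdot 2^m = 2^m$, and $\pi^{-\beta mn/2}\cdot\pi^{mn\beta/2}\cdot\pi^{m^2\beta/2} = \pi^{\beta m^2/2}$, matching (\ref{svMPII}). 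I do not anticipate a genuine obstacle here — the only point requiring minor care is the bookkeeping of the power of $\pi$ and of $2$ across the Jacobian and the two Stiefel-manifold volumes, and checking that the support conditions $1>\alpha_1>\cdots>\alpha_m>0$ together with $\sum_i\alpha_i^2<1$ are exactly what is inherited from $1-\tr\mathbf{R}_1\mathbf{R}_1^{*}>0$.
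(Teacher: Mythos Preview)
Your proposal is correct and follows exactly the paper's approach: the paper's proof is the single line ``analogous to that given for Theorem~\ref{teosv},'' which in turn says to start from the density, apply the SVD Jacobian (\ref{svd}), and then integrate out the Stiefel-manifold factors via (\ref{vol}). Your write-up simply makes explicit the constant bookkeeping that the paper leaves to the reader, and your checks on the powers of $2$ and $\pi$ are right.
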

\begin{proof}
The proof is analogous to that given for Theorem \ref{teosv}. \qed
\end{proof}
Similarly, the joint density of the singular values of $\widetilde{\mathbf{R}}_{1}$ is obtained
from (\ref{svMPII}) and making the substitutions (\ref{s2}).

Finally, observe that $\delta_{i} = \sqrt{\eig_{i}(\mathbf{RR}^{*})}$  and $\alpha_{i} =
\sqrt{\eig_{i}(\mathbf{R}_{1}\mathbf{R}_{1}^{*})}$, where $\eig_{i}(\mathbf{A})$, $i = 1,
\dots, m$, denotes the $i$-th eigenvalue of $\mathbf{A}$. Let $\lambda_{i} =
\eig_{i}(\mathbf{RR}^{*})$ and $\gamma_{i} = \eig_{i}(\mathbf{R}_{1}\mathbf{R}_{1}^{*})$,
hence, observing that, for example, $\delta_{i} = \sqrt{\lambda_{i}}$ and then
$$
  \bigwedge_{i=1}^{m} d\delta_{i} = \bigwedge_{i=1}^{m} 2^{-m} \prod_{i=1}^{m}
  \lambda_{i}^{-1/2}d\lambda_{i},
$$
the corresponding joint density of $\lambda_{1}, \dots, \lambda_{m}$, $1 > \lambda_{1} > \cdots
> \lambda_{m} > 0$ is obtained from (\ref{svPII}) as
$$
    \frac{\pi^{\beta m^{2}+\tau}}{\Gamma_{m}^{\beta}[\beta m/2] \mathcal{B}_{m}^{\beta}[\beta \nu/2, \beta n/2]}
    \prod_{i=1}^{m} \lambda_{i}^{\beta(n-m+1)/2-1}(1-\lambda_{i})^{\beta(\nu-m+1)/2-1}
    \prod_{i<j}^{m}(\lambda_{i} - \lambda_{j})^{\beta}.
$$
Similarly, the joint density of $\gamma_{1}, \dots, \gamma_{m}$, $1 > \gamma_{1} > \cdots >
\gamma_{m}> 0$, $0 < \sum_{i=1}^{m}\alpha_{i} < 1$, is obtained from (\ref{svMPII}) as
$$
  \frac{\pi^{\beta m^{2}/2 + \tau}\Gamma_{1}^{\beta}[\beta(\nu +
  mn)/2]}{\Gamma_{1}^{\beta}[\beta \nu/2]
  \Gamma_{m}^{\beta}[\beta m/2]\Gamma_{m}^{\beta}[\beta n/2]}
  \left(1-\sum_{i=1}^{m}\gamma_{i}\right)^{\beta \nu/2-1}\prod_{i=1}^{m} \gamma_{i}^{\beta(n-m+1)/2-1}
  \prod_{i<j}^{m}(\gamma_{i} - \gamma_{j})^{\beta}.
$$

\section*{Conclusions}
\begin{itemize}
  \item Beyond a doubt, in any generalisation of results there is a price to be paid, and in this case the price is that of acquiring
a basic understanding of some concepts of abstract algebra, which can initially be summarised
as the use of notation and a basic minimum set of definitions. However, we believe that a
detailed study of mathematical properties from a statistical standpoint can have a potential
impact on statistical theory.
  \item Furthermore, note that $\mathbf{X}\in \mathfrak{L}^{\beta}_{m,n}$ has a matrix multivariate elliptically
contoured distribution for real normed division algebras if its density, with respect to the
Lebesgue measure, is given by (see \citet{jdggj:09a}):
\begin{equation}\label{mve}
  \frac{C^{\beta}(m,n)}{|\mathbf{\Sigma}|^{\beta n/2}|\mathbf{\Theta}|^{\beta m/2}}
  h\left\{\tr\left[\mathbf{\Sigma}^{-1}(\mathbf{X}-\boldsymbol{\mu})\mathbf{\Theta}^{-1}
  (\mathbf{X}- \boldsymbol{\mu})^{*}\right]\right\},
\end{equation}
where  $\boldsymbol{\mu}\in \mathfrak{L}^{\beta}_{m,n}$, $ \mathbf{\Sigma}\in
\mathfrak{P}^{\beta}_{m}$,  $ \mathbf{\Theta}\in \mathfrak{P}^{\beta}_{m}$. The function $h:
\mathfrak{F} \rightarrow [0,\infty)$ is termed the generator function, and it is such that
$\int_{\mathfrak{P}^{\beta}_{1}} u^{\beta nm-1}h(u^2)du < \infty$ and
$$
  C^{\beta}(m,n) = \frac{\Gamma[\beta mn/2]}{2 \pi^{\beta mn/2}} \left\{
    \int_{\mathfrak{P}^{\beta}_{1}} u^{\beta nm-1}h(u^2)du\right \}
$$
Such a distribution is denoted by $\mathbf{X}\sim \mathcal{E}^{\beta}_{n\times
m}(\boldsymbol{\mu},\mathbf{\Sigma}, \mathbf{\Theta}, h)$, for the real case, see \citet{fz:90}
and \citet{gv:93}; and \citet{mdm:06} for the complex case. Observe that this class of matrix
multivariate distributions includes normal, contaminated normal, Pearson type II and VII, Kotz,
Jensen-Logistic, power exponential and Bessel distributions, among others; these distributions
have tails that are more or less weighted, and/or present a greater or smaller degree of
kurtosis than the normal distribution.

Assume that $\mathbf{X} = (\build{\mathbf{X}_{1}}{}{m \times n}\vdots\build{\mathbf{X}_{2}}{}{m
\times \nu}) \sim \mathcal{E}^{\beta}_{m \times n+\nu}(\boldsymbol{0},\mathbf{I}_{m},
\mathbf{I}_{n+\nu}, h)$, $n,\nu \geq m$; and define, $\mathbf{R} =
\mathbf{L}^{-1}\mathbf{X}_{1}$, where $\mathbf{L}$ is any square root of $\mathbf{V} =
(\mathbf{X}_{2}\mathbf{X}_{2}^{*} + \mathbf{X}_{1}\mathbf{X}_{1}^{*})$ such that
$\mathbf{LL}^{*} = \mathbf{V}$. Then $\mathbf{R} \sim \mathcal{P}II_{m \times
n}^{\beta}(\nu,\mathbf{0}, \mathbf{I}_{m},\mathbf{I}_{n})$ independently of $\mathbf{V} \sim
\mathcal{GW}_{m}^{\beta}(n+\nu, \mathbf{I}_{m},h)$, $n+\nu > \beta(m-1)$, where
$\mathcal{GW}_{m}^{\beta}(\cdot)$ denotes the generalised Wishart distribution based on an
elliptical distribution, see \citet{jdggj:09a} and \citet{jdggj:10b}. From (\ref{mve}) the
density of $\mathbf{X}$ is
$$
  C^{\beta}(m,n+\nu) h\left\{\tr\left(\mathbf{X}_{1}\mathbf{X}_{1}^{*} +
  \mathbf{X}_{2}\mathbf{X}_{2}^{*}\right)\right\}.
$$
Let $\mathbf{V}_{0} = \mathbf{X}_{2}\mathbf{X}_{2}^{*}$ then by (\ref{lemW}),
$(d\mathbf{X}_{2}) = 2^{-m}|\mathbf{V}_{0}|^{\beta(\nu -m+1)/2-1}(d\mathbf{V}_{0})
(\mathbf{H}_{1}d\mathbf{H}_{1}^{*})$. Thus, the marginal density of $\mathbf{X}_{1}$ and
$\mathbf{V}_{0}$ is obtained by integrating over $\mathbf{H}_{1} \in
\mathcal{V}_{m,n}^{\beta}$, and then using (\ref{vol}), to obtain
$$
  \frac{C^{\beta}(m,n+\nu) \pi^{\beta \nu m/2}}{\Gamma_{m}^{\beta}[\beta \nu/2]} |\mathbf{V}_{0}|^{\beta(\nu -m+1)/2-1}
  h\left\{\tr\left(\mathbf{X}_{1}\mathbf{X}_{1}^{*} +  \mathbf{V}_{0}\right)\right\}.
$$
Now, let $\mathbf{V} = (\mathbf{V}_{0} + \mathbf{X}_{1}\mathbf{X}_{1}^{*})$ and $\mathbf{R} =
\mathbf{L}^{-1}\mathbf{X}_{1}$, where $\mathbf{L}\mathbf{L}^{*} =\mathbf{V}$, then by
(\ref{lemlt})
$$
  (d\mathbf{X}_{1})(d\mathbf{V}_{0}) =  |\mathbf{V}|^{\beta n/2}(d\mathbf{R})(d\mathbf{V}).
$$
Observing that $|\mathbf{V}_{0}| = |\mathbf{V}||\mathbf{I}_{m}-\mathbf{R}\mathbf{R}^{*}|$, and
so the joint density of $\mathbf{R}$ and $\mathbf{V}$ is
$$
  \frac{C^{\beta}(m,n+\nu) \pi^{\beta \nu m/2}}{\Gamma_{m}^{\beta}[\beta \nu/2]}
  |\mathbf{I}_{m}-\mathbf{R}\mathbf{R}^{*}|^{\beta(\nu -m+1)/2-1} |\mathbf{V}|^{\beta(n+\nu -m+1)/2-1}
  h\left\{\tr\mathbf{V}\right\}.
$$
from where the desired result follows. \qed

Observe that in this case, $\mathbf{X}_{1}$ and $\mathbf{X}_{2}$ (or $\mathbf{V}_{0} =
\mathbf{X}_{2}\mathbf{X}_{2}^{*}$) are stochastically dependent. Furthermore, note that only
when the particular matrix multivariate elliptical distribution is the matrix multivariate
normal distribution, are $\mathbf{X}_{1}$ and $\mathbf{X}_{2}$ (or $\mathbf{V}_{0} =
\mathbf{X}_{2}\mathbf{X}_{2}^{*}$) independent. Then, we can say that the matricvariate Pearson
type II distribution is invariant under the family of matrix multivariate elliptical
distributions, and furthermore, its density is the same as when normality is assumed. In the
same way, it can be proved that the matrix multivariate Pearson type II, matricvariate and
matrix multivariate beta type I distributions are invariant under the family of matrix
multivariate elliptical distributions.
\item Finally, following \citet{k:84}, the distributions studied in this paper are easily extended to
the hypercomplex case (biquaternion and bioctonion cases, which are a Jordan algebras), simply
replacing $\beta$ by $2\beta$ in the obtained results.
\end{itemize}

\section*{Acknowledgements}
This research work was partially supported  by CONACYT-M\'exico, Research Grant No. \ 81512 and
IDI-Spain, Grants No. FQM2006-2271 and MTM2008-05785. This paper was written during J. A.
D\'{\i}az-Garc\'{\i}a's stay as a visiting professor at the Department of Statistics and O. R.
of the University of Granada, Spain.

\bibliographystyle{plain}

\end{document}